\crefname{section}{\textsl{Section}}{section}
\crefname{subsection}{\textsl{Subsection}}{subsection}
\crefname{definition}{\textsl{Definition}}{definition}
\crefname{theorem}{\textsl{Theorem}}{theorem}
\crefname{lemma}{\textsl{Lemma}}{lemma}
\crefname{proposition}{\textsl{Proposition}}{proposition}
\crefname{corollary}{\textsl{Corollary}}{corollary}
\crefname{remark}{\textsl{Remark}}{remark}
\crefname{equation}{}{}
\newtheorem{lemma}{Lemma}[section]
\newtheorem{theorem}{Theorem}[section]
\newtheorem{corollary}{Corollary}[section]
\newtheorem{remark}{Remark}[section]
\numberwithin{equation}{section}
\DeclareMathOperator*{\osc}{osc}
\newcommand{\abs}[1]{\left\vert#1\right\vert}
\newcommand{\norm}[1]{\left\Vert#1\right\Vert}
\newcommand{\set}[1]{\left\{#1\right\}}
\newcommand{\R}{\mathbb R}
\newcommand{\C}{\mathbb C}
\newcommand{\Z}{\mathbb Z}
\newcommand{\N}{\mathbb N}
\newcommand{\CC}{\mathcal C}
\newcommand{\CS}{\mathcal S}
\newcommand{\fc}{\mathfrak c}
\newcommand{\fs}{\mathfrak s}
\newcommand{\ve}{\varepsilon}
\newcommand{\vth}{\vartheta}
\newcommand{\ol}{\bar}
\newcommand{\ul}{\underline}
\newcommand{\p}{\partial}
\newcommand{\dps}{\displaystyle}
\newcommand{\wt}{\widetilde}
\newcommand{\wh}{\widehat}
\newcommand{\ra}{\rightarrow}
\begin{document}

\title[A Bernstein problem for special Lagrangian equations]
{A Bernstein problem for special Lagrangian equations in exterior domains}

\author{Dongsheng Li}
\address{Dongsheng Li: School of Mathematics and Statistics,
Xi'an Jiaotong University, Xi'an 710049, China;}
\email{\tt lidsh@mail.xjtu.edu.cn}

\author{Zhisu Li}
\address{Zhisu Li: School of Mathematics and Statistics,
Xi'an Jiaotong University, Xi'an 710049, China;}
\email{\tt lizhisu@stu.xjtu.edu.cn}

\author{Yu Yuan}
\address{Yu Yuan:
Department of Mathematics,
University of Washington,
Box 354350; Seattle, WA 98195-4350; USA;}
\email{\tt yuan@math.washington.edu}

\thanks{Zhisu Li is the corresponding author.}

\date{\today}


\begin{abstract}
We establish quadratic asymptotics for solutions
to special Lagrangian equations with supercritical phases in exterior domains.
The method is based on an exterior Liouville type result
for general fully nonlinear elliptic equations
toward constant asymptotics of bounded Hessian,
and also certain rotation arguments toward Hessian bound.
Our unified approach also leads to quadratic asymptotics for convex solutions
to Monge-Amp\`{e}re equations (previously known), quadratic Hessian equations,
and inverse harmonic Hessian equations over exterior domains.
\end{abstract}

%

\maketitle


\section{Introduction}

In this paper, we establish an exterior Bernstein type result
for special Lagrangian equations with supercritical phases:
every exterior solution is asymptotic to a quadratic polynomial at infinity.
\begin{theorem}\label{thm.u=Q+O.SLE}
Let $u$ be a smooth solution of the special Lagrangian equation
\begin{equation}\label{eqn.SLE}
\sum_{i=1}^{n}\arctan\lambda_i(D^2u)=\Theta
\quad\mathrm{in}\quad\R^n\setminus\ol{\Omega},
\end{equation}
where constant $\Theta$ satisfies $|\Theta|>(n-2)\pi/2$,
$\Omega$ is a bounded domain,
and $\lambda_i(D^2u)$'s denote the eigenvalues of the Hessian $D^2u$.
Then there exists a unique quadratic polynomial $Q(x)$
such that when $n\geq3$,
\begin{equation}\label{eqn.SLE.u=Q+ON}
u(x)=Q(x)+O_k(|x|^{2-n})\quad\mathrm{as}\quad|x|\ra\infty
\end{equation}
for all $k\in\N$, and when $n=2$,
\begin{equation}\label{eqn.SLE.u=Q+log+ON}
u(x)=Q(x)+\frac{d}{2}\log{x^T(I+(D^2Q)^2)x}+O_k(|x|^{-1})
\quad\mathrm{as}\quad|x|\ra\infty
\end{equation}
for all $k\in\N$, where
\[d=\frac{1}{2\pi}\left(\int_{\p\Omega}\cos\Theta\,u_{\nu}
+\sin\Theta\,u_{1}(u_{22},-u_{12})\cdot\nu\,ds-\sin\Theta\,|\Omega|\right),\]
$\nu$ is the unit outward normal of the boundary $\p\Omega$,
and the notation $\varphi(x)=O_k(|x|^m)$
means that $|D^{k}\varphi(x)|=O(|x|^{m-k})$.
\end{theorem}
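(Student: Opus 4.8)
The plan is to turn \eqref{eqn.SLE} into a \emph{uniformly} elliptic problem by rotating the Lagrangian gradient graph, apply the exterior Liouville result toward constant Hessian asymptotics to the rotated potential, and transport the resulting expansion back; the decisive point will be to show the inverse rotation is non-degenerate at infinity, which is exactly the statement that $D^2u$ is bounded there. \textbf{Step 1 (semiconvexity and rotation).} Replacing $u$ by $-u$ if necessary we take $\Theta>0$. Ordering $\lambda_1(D^2u)\ge\cdots\ge\lambda_n(D^2u)$, the supercritical hypothesis gives $\arctan\lambda_n=\Theta-\sum_{i<n}\arctan\lambda_i>\Theta-(n-1)\pi/2>-\pi/2$, hence $D^2u\ge-c_0I$ on $\R^n\setminus\ol\Omega$ for some $c_0=c_0(n,\Theta)\ge0$. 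Fix $\theta\in(0,\pi/2)$ with $\cot\theta>c_0$, so $\phi:=\tfrac{\cos\theta}{2}|x|^2+\sin\theta\,u$ is smooth and strongly convex with $D^2\phi\ge(\cos\theta-c_0\sin\theta)I>0$; then $D\phi$ is a proper diffeomorphism of $\R^n\setminus\ol\Omega$ onto a domain $\mathcal O$ whose complement is compact (a standard fact in the exterior convex setting). The rotated graph $\{(\bar x,\bar y):\bar x=\cos\theta\,x+\sin\theta\,Du(x),\ \bar y=-\sin\theta\,x+\cos\theta\,Du(x)\}$ is the gradient graph of a smooth $\bar u$ on $\mathcal O$, with $\phi^\ast(\bar x)=\tfrac{\cos\theta}{2}|\bar x|^2-\sin\theta\,\bar u(\bar x)+\mathrm{const}$, where $\phi^\ast$ is the Legendre transform of $\phi$; eigenvalues transform by the increasing Möbius map $g(\lambda)=(\cos\theta\,\lambda-\sin\theta)/(\sin\theta\,\lambda+\cos\theta)$, so $\bar u$ solves the special Lagrangian equation with phase $\bar\Theta=\Theta-n\theta$ on $\mathcal O$, and since $g(-c_0)\le g(\lambda_i)<\cot\theta$ we obtain the two-sided bound $|D^2\bar u|\le C(n,\Theta,\theta)$.

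\textbf{Step 2 (exterior Liouville for $\bar u$, and the main obstacle).} Because $|D^2\bar u|\le C$, the special Lagrangian operator is uniformly elliptic and smooth along $D^2\bar u$, so the exterior Liouville result applies to $\bar u$ on $\mathcal O$: there are a constant symmetric $\bar A$ with $\sum_i\arctan\lambda_i(\bar A)=\bar\Theta$ and a quadratic $\bar Q(\bar x)=\tfrac12\langle\bar A\bar x,\bar x\rangle+\bar b\cdot\bar x+\bar c$ with $D^2\bar u(\bar x)\to\bar A$ and, for all $k\in\N$, $\bar u(\bar x)=\bar Q(\bar x)+O_k(|\bar x|^{2-n})$ if $n\ge3$ and $\bar u(\bar x)=\bar Q(\bar x)+\tfrac{\bar d}{2}\log(\bar x^T(I+\bar A^2)\bar x)+O_k(|\bar x|^{-1})$ if $n=2$. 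The inverse rotation reads $x=\cos\theta\,\bar x-\sin\theta\,D\bar u(\bar x)=(\cos\theta I-\sin\theta\,\bar A)\bar x+O(|\bar x|^{1-n})+\mathrm{const}$, so I must show $M:=\cos\theta I-\sin\theta\,\bar A$ is invertible, i.e.\ $\cot\theta\notin\mathrm{spec}\,\bar A$; this is exactly the assertion that $D^2u$ stays bounded at infinity, and I expect it to be the crux of the argument. If $Me=0$, $|e|=1$, choose coordinates with $e=e_n$; since $\cos\theta-\sin\theta\cot\theta=0$, the $\bar x_n^2$-coefficient of $\tfrac{\cos\theta}{2}|\bar x|^2-\sin\theta\,\bar Q(\bar x)$ vanishes, so Step 2 gives $\phi^\ast(\bar x)=\tfrac12\langle M\bar x,\bar x\rangle+\beta\cdot\bar x+\gamma+O_k(|\bar x|^{2-n})$ with $M\ge0$, $Me_n=0$ (with the harmless $\log$-modification when $n=2$), hence $D\phi^\ast(\bar x)=M\bar x+\beta+O(|\bar x|^{1-n})$ and $D\phi^\ast(te_n)\to\beta$ as $t\to+\infty$; but $D\phi^\ast=(D\phi)^{-1}$ is proper on $\mathcal O$, so $|D\phi^\ast(\bar x)|\to\infty$ as $|\bar x|\to\infty$ — a contradiction. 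Therefore $M>0$; equivalently $D^2\bar u\le(\cot\theta-\e)I$ near infinity, and rotating back shows $D^2u$ is bounded there with $D^2u(x)\to A:=(\sin\theta I+\cos\theta\,\bar A)M^{-1}$.

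\textbf{Step 3 (transporting the expansion, the $\log$-term, the flux, uniqueness).} With $M>0$, the map $x\mapsto\bar x=\cos\theta\,x+\sin\theta\,Du(x)$ is, near infinity, a smooth perturbation of the linear isomorphism $x\mapsto M^{-1}x$ by terms all of whose derivatives decay; transporting the expansion of $\bar u$ through it and through the generating-function relation between $u$ and $\bar u$, and using the algebraic identity $M^{-1}(I+\bar A^2)M^{-1}=I+A^2$ for the $\log$-argument when $n=2$, produces a quadratic $Q$ with $D^2Q=A$ and $\sum_i\arctan\lambda_i(A)=\Theta$ for which \eqref{eqn.SLE.u=Q+ON} holds ($n\ge3$) and \eqref{eqn.SLE.u=Q+log+ON} holds ($n=2$, with $d=\bar d$). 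Uniqueness of $Q$ is immediate: the difference of two such expansions is a polynomial of degree $\le2$ that is $o(|x|^2)$ modulo the common lower-order terms, hence zero. To put $d$ in the stated boundary form, one uses that for $n=2$ equation \eqref{eqn.SLE} is $\mathrm{div}(\cos\Theta\,Du+\sin\Theta\,u_1(u_{22},-u_{12}))=\sin\Theta$; integrating over $B_R\setminus\ol\Omega$ and letting $R\to\infty$, the flux of the left side through $\p B_R$ equals $\sin\Theta\,\pi R^2+2\pi d+o(1)$ (the $Q$-part contributing $\sin\Theta\,\pi R^2$ with no constant since the trace of its flux matrix is $\sin\Theta$, the $\tfrac d2\log$-part contributing $2\pi d$, the $O_k(|x|^{-1})$ tail being $o(1)$), while the divergence theorem equates it with $\sin\Theta\,\pi R^2+\int_{\p\Omega}(\cos\Theta\,u_\nu+\sin\Theta\,u_1(u_{22},-u_{12})\cdot\nu)\,ds-\sin\Theta\,|\Omega|$; comparing yields the asserted formula. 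Overall, Steps 1 and 3 are changes of variables plus the cited Liouville result, whereas the obstruction in Step 2 — closing off the degenerate rotation, equivalently the Hessian bound at infinity, which is where the supercritical phase is genuinely used (via semiconvexity) and where properness of the Legendre map is essential — is the part I expect to demand the most care, both for the existence of the limiting matrix $\bar A$ and for the quantitative fact that $D^2\bar u$ stays bounded \emph{away} from $\cot\theta$ so that the $O_k$ bounds survive the inverse rotation.
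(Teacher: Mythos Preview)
For $n\ge 3$ your plan is essentially the paper's: rotate to a potential $\bar u$ with two-sided Hessian bound, apply the exterior Liouville theorem (\cref{thm.u=Q+O.FNL}) to get $D^2\bar u\to\bar A$, then rule out $\cot\theta\in\mathrm{spec}\,\bar A$ so the inverse rotation is nondegenerate; your properness argument is the paper's ``strip'' argument (if $\bar A e_1=\cot\theta\,e_1$ then $x_1=\cos\theta\,\bar x_1-\sin\theta\,\bar y_1=O(1)$, contradicting unboundedness of $\R^n\setminus\ol\Omega$) in different clothing. Two small repairs: (i) \cref{thm.u=Q+O.FNL} requires concavity or a convex level set, which for the rotated equation means $\bar\Theta=\Theta-n\theta\ge(n-2)\pi/2$, so you must take $\theta\le\vartheta:=(\Theta-(n-2)\pi/2)/n$ (the paper takes $\theta=\vartheta$ exactly), not an arbitrary $\theta$ with $\cot\theta>c_0$; (ii) the properness of $(D\phi)^{-1}$ in the sense $|x|\to\infty$ (rather than $x\to\partial\Omega$) requires first extending $u$ smoothly and semiconvexly across $\Omega$, which the paper does at the outset. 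Once $|D^2u|$ is bounded the paper simply applies \cref{thm.u=Q+O.FNL} a second time, directly to $u$, rather than transporting the $\bar u$-expansion back; this avoids the change-of-variable bookkeeping in your Step~3.

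The genuine gap is the case $n=2$. You invoke ``the exterior Liouville result'' to produce $\bar u=\bar Q+\tfrac{\bar d}{2}\log(\bar x^T(I+\bar A^2)\bar x)+O_k(|\bar x|^{-1})$, but \cref{thm.u=Q+O.FNL} is stated and proved only for $n\ge 3$, and no two-dimensional analogue with a logarithmic term is supplied; assuming one for $\bar u$ (which also solves a supercritical special Lagrangian equation) is essentially assuming the theorem you are proving. The paper instead exploits the freedom in the rotation angle: taking $\theta=\Theta/2$ makes $\bar\Theta=0$, so $\bar u$ is \emph{harmonic}, and then $h(z)=\bar u_{\bar x_1}-\sqrt{-1}\,\bar u_{\bar x_2}$ is holomorphic with linear growth, its Laurent expansion $h(z)=a_1z+a_0+a_{-1}z^{-1}+\cdots$ yields the quadratic, linear and $a_{-1}\log|z|$ pieces of $\bar u$ explicitly (and forces $a_{-1}\in\R$). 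Only after this does one substitute back to $x$-coordinates and run your divergence/flux computation for $d$ and the rescaled Schauder argument for the $O_k$ tails, both of which are as you describe.
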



Special Lagrangian equation \eqref{eqn.SLE}
is the potential equation for the minimal Lagrangian or ``gradient'' graph
$(x,Du(x))\subset\R^n\times\R^n$, in calibrated geometry \cite{HL78}.
When $n=2$, the trigonometric equation \eqref{eqn.SLE}
also takes the algebraic form
$\cos\Theta\,\Delta u+\sin\Theta\,\det D^2u=\sin\Theta\,$;
while for $n=3$, and $|\Theta|=\pi$ or $\pi/2$,
equation \eqref{eqn.SLE} is equivalent to $\Delta u=\det D^2u$ or
$\sigma_2(D^2u)=\lambda_1\lambda_2+\lambda_2\lambda_3+\lambda_3\lambda_1=1$
respectively.
The phase or Lagrangian angle $(n-2)\pi/2$ is called \textit{critical},
since the level set
$\set{\lambda\in\R^n\,|\,\text{$\lambda$ satisfying \eqref{eqn.SLE}}}$
is convex \textit{only} when $|\Theta|\geq(n-2)\pi/2$ \cite[Lemma 2.1]{Y06}.
Simple solution $\sin x_1e^{x_2}$ and precious one
$(x_1^2+x_2^2)e^{x_3}-e^{x_3}+e^{-x_3}/4$ \cite{W16} to
\eqref{eqn.SLE} with $\Theta=(n-2)\pi/2$, $n=2$ and $n=3$
respectively show that the ``critical'' phase condition
in \cref{thm.u=Q+O.SLE} is indeed necessary.
The ``entire'' Bernstein-Liouville type problem has been well-studied,
see for instance \cite{Bo92,Fu98,BCGJ03,Y02,Y06,WY08}.

Corresponding to minimal surface equations over exterior domains,
there are the well-known exterior Bernstein type results
only in low dimensions \cite{B51} (for $n=2$) \cite{Si87} (for $3\leq n\leq7$),
which assert that all solutions approach to linear functions asymptotically near infinity.
The same linear asymptotics continue to hold in all higher dimensions,
if certain necessary conditions
such as the boundedness of the gradient of solutions are assumed (cf. \cite{Si87}).
For Monge-Amp\`{e}re equations in exterior domains, there are exterior
J\"{o}rgens-Calabi-Pogorelov type results \cite{FMM99} (for $n=2$)
and \cite{CL03} \cite{BLZ15} (for $n\geq2$),
which state that all (convex) solutions are asymptotic to
quadratic polynomials near infinity.

Heuristically the plane asymptotic behavior for minimal surfaces in exterior domains
(quadratic for special Lagrangian equations and linear for minimal surface systems)
is ``clear''---seen through the monotonicity formula---%
once the tangent cone at infinity is flat.
But as one tries to employ Allard's $\ve$-regularity to locate the ``flat'' plane,
those approximated planes over larger and larger annuli
could potentially keep changing.
This difficulty still prevents us from seeing the quadratic asymptotics
for solutions to general special Lagrangian equations
and linear asymptotics for solutions to minimal surface systems in exterior domains,
where entire rigidity results are available, or tangent cones at infinity are flat,
for example \cite{Y02,WY08} (except for convex solutions) and \cite{HJW80,Fi80,X03}.

To circumvent the difficulty in proving \cref{thm.u=Q+O.SLE},
we take advantage of the fully nonlinear elliptic equation
with concavity satisfied by the single potential.
The key is to show that the Hessian of the solutions has a finite limit at infinity.
Still unlike in the case of minimal surface equation,
the gradient of the solution enjoys Moser's Harnack inequality,
then the limit of the bounded gradient can be quickly drawn at infinity.
Fortunately, the pure second derivatives of the solutions
are supersolutions to the linearized elliptic equation,
then satisfy Krylov-Safonov's weak Harnack inequality (over annuli)
and Evans-Kylov's Hessian estimates.
From here, the limit of the Hessian at infinity can be achieved.
This is the content of \cref{sec.ELT.FNL},
where a finer exterior Liouville theorem
for general fully nonlinear uniformly elliptic concave equations
with bounded Hessian (\cref{thm.u=Q+O.FNL})---%
along with an exterior Liouville theorem for positive solutions
to linear elliptic equations in nondivergence form
(\cref{thm.ELT-LEE-ND})---is established.

There is still another hurdle in making all the above work:
we need the Hessian of solutions to be bounded
and the fully nonlinear concave equation to be uniformly elliptic.
This is done via a rotation device developed in \cite{Y02,Y06};
see the proof of \cref{thm.u=Q+O.SLE} in \cref{sec.EBT.SLE}.

In passing, we make the following remarks.
All solutions to special Lagrangian equation \eqref{eqn.SLE}
with critical phase and with quadratic growth near infinity
must have the same quadratic asymptotic behavior,
if one combines the a priori gradient and Hessian estimates
in \cite{WY09a,WY10,WdY14} with our general exterior Liouville \cref{thm.u=Q+O.FNL}.
This exterior Liouville type result and the exterior Bernstein \cref{thm.u=Q+O.SLE}
also hold true for continuous viscosity solutions,
in light of the regularity for solutions of special Lagrangian equations
\cite{WY09a,WY09b,WY10,CWY09,WdY14}.

We close this introduction by the following. It turns out that our arguments toward \cref{thm.u=Q+O.SLE}
also lead to quadratic asymptotics for convex solutions
to quadratic Hessian equations and inverse harmonic Hessian equations in exterior domains,
for which Chang and the third author \cite{CY10} and Flanders \cite{Fl60}
obtained entire Liouville type results respectively.
Our unified approach also gives a different proof
for the J\"{o}rgens-Calabi-Pogorelov type result
for Monge-Amp\`{e}re equations in exterior domains, which, as mentioned above,
have been done earlier by Ferrer-Mart\'{\i}nez-Mil\'{a}n \cite{FMM99}
and Caffarelli-Li \cite{CL03}; see \cref{sec.FP}.

\section{Exterior Liouville theorems}\label{sec.ELT.FNL}

In this section, we establish the following Liouville type theorem
for general fully nonlinear elliptic equations
with bounded Hessian in exterior domains.
\begin{theorem}\label{thm.u=Q+O.FNL}
Let $u$ be a smooth solution of
\begin{equation}\label{eqn.FNL}
F(D^2u)=0
\end{equation}
in the exterior domain $\R^n\setminus\ol B_1$,
where $n\geq3$, $F$ is uniformly elliptic
with the ellipticity constants $\lambda$ and $\Lambda$,
and also, $F$ is either convex, or concave,
or the level set $\set{M\,|\,F(M)=0}$ is convex.
Suppose
\[\norm{D^2u}_{L^\infty\left(\R^n\setminus\ol B_1\right)}\leq K<+\infty.\]
Then there exists a unique quadratic polynomial
\[Q(x)=\frac{1}{2}x^{T}Ax+b^Tx+c\]
such that \[u(x)=Q(x)+O(|x|^{2-n})\quad\mathrm{as}\quad|x|\ra\infty.\]
Furthermore, if $F$ is infinitely smooth, then we have
\[u(x)=Q(x)+O_k(|x|^{2-n})\quad\mathrm{as}\quad|x|\ra\infty\]
for all $k\in\N$.
\end{theorem}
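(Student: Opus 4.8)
The plan is to prove Theorem~\ref{thm.u=Q+O.FNL} by the following sequence of reductions, each of which linearizes the problem one step further.

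\medskip

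\noindent\textbf{Step 1: Gradient has a limit at infinity via Moser's Harnack inequality.}
Since $\norm{D^2u}_{L^\infty}\le K$, each pure second derivative $u_{ee}$ is bounded. Differentiating $F(D^2u)=0$ in the direction $e$ shows that $v=u_e$ solves a linear uniformly elliptic equation $a^{ij}(x)\,\partial_{ij}v=0$ with $a^{ij}=F_{ij}(D^2u)$ satisfying the uniform ellipticity bounds $\lambda I\le (a^{ij})\le\Lambda I$ (here we do not yet use any concavity). Because $|v|=|u_e|$ grows at most linearly and $|Dv|\le\sqrt n\,K$, on each dyadic annulus $B_{2R}\setminus B_R$ the quantity $v$ is a bounded solution of a nondivergence-form elliptic equation; applying Moser's Harnack inequality to $M_R-v$ and $v-m_R$ (where $M_R,m_R$ are the sup and inf over a slightly larger annulus), together with a standard iteration over $R=2^j$, forces $\osc_{\partial B_R}u_e\to0$. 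Hence $Du(x)\to b$ for some $b\in\R^n$ as $|x|\to\infty$.

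\medskip

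\noindent\textbf{Step 2: Hessian has a limit at infinity via Krylov--Safonov and Evans--Krylov.}
This is the step where the structural hypothesis on $F$ enters. When $F$ is concave (the convex case is symmetric; the convex-level-set case is reduced to one of these after noting the level set of $F=0$ can be written as $\{\tilde F=0\}$ with $\tilde F$ concave, cf.\ \cite{Y06}), each pure second derivative $u_{ee}$ is a supersolution of the linearized equation $a^{ij}\partial_{ij}(u_{ee})\le0$. Since $u_{ee}$ is bounded, Krylov--Safonov's weak Harnack inequality applied over dyadic annuli gives a geometric decay $\osc_{B_{2R}\setminus B_R}u_{ee}\le\theta\,\osc_{B_{4R}\setminus B_{R/2}}u_{ee}$ for some $\theta<1$; iterating yields $\osc_{\partial B_R}D^2u\to0$, so $D^2u(x)\to A$ for a constant symmetric matrix $A$ with $\lambda I\le A+$ (range governed by ellipticity), and $F(A)=0$. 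The Evans--Krylov interior estimate, rescaled on each annulus, upgrades this to $[D^2u]_{C^\alpha(B_{2R}\setminus B_R)}\to0$, i.e.\ $D^2u(x)-A=O(|x|^{-\alpha})$.

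\medskip

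\noindent\textbf{Step 3: Reduce to the linear Liouville theorem and conclude.}
Set $w(x)=u(x)-\tfrac12 x^TAx$. Then $Dw\to b$ and $D^2w\to0$ at infinity. Since $F(D^2u)=0=F(A)$ and $F\in C^1$, we may write, by the fundamental theorem of calculus along the segment from $A$ to $D^2u(x)$,
\[
0=F(D^2u)-F(A)=\tilde a^{ij}(x)\,w_{ij}(x),\qquad \tilde a^{ij}(x)=\int_0^1 F_{ij}\bigl(A+t\,D^2w(x)\bigr)\,dt,
\]
and $(\tilde a^{ij})$ is uniformly elliptic with $\tilde a^{ij}(x)\to F_{ij}(A)=:a^{ij}_\infty$, a constant positive-definite matrix. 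After an affine change of variables diagonalizing $a^{ij}_\infty$, $w$ is an exterior solution of a nondivergence-form linear equation with continuous coefficients converging to the Laplacian, and $Dw$ is bounded with $Dw\to b$. Then $\partial_e(w - b\cdot x)$ is a bounded solution vanishing at infinity, and Theorem~\ref{thm.ELT-LEE-ND} (the exterior Liouville theorem for positive solutions of nondivergence linear equations, applied to $C\pm w_e$) forces $w_e(x)=O(|x|^{1-n})$ for each $e$, hence $Du(x)=Ax+b+O(|x|^{1-n})$; integrating once more, $u(x)=\tfrac12 x^TAx+b\cdot x+c+O(|x|^{2-n})$, which gives the claimed expansion with $Q(x)=\tfrac12x^TAx+b\cdot x+c$. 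Uniqueness of $Q$ is immediate since any quadratic polynomial that is $O(|x|^{2-n})=o(|x|^2)$ must vanish together with its linear part, and the constant is pinned down by the $O(|x|^{2-n})$ remainder.

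\medskip

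\noindent\textbf{Step 4: Higher-order expansion when $F$ is smooth.}
Once $D^2u\to A$ with a power decay rate, the coefficients $\tilde a^{ij}(x)$ inherit the smoothness of $F$ and decay to $a^{ij}_\infty$; a Schauder-type bootstrap on dyadic annuli (rescaling $u_R(y)=R^{-2}u(Ry)$ on the unit annulus, applying interior Schauder estimates, and unscaling) gives $\norm{D^ku}_{C^\alpha}$ decay for every $k$. Combined with the sharp decay rate $O(|x|^{2-n})$ for $u-Q$ obtained in Step~3, the interpolation/scaling argument promotes this to $u(x)=Q(x)+O_k(|x|^{2-n})$ for all $k\in\N$, i.e.\ $|D^k(u-Q)(x)|=O(|x|^{2-n-k})$.

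\medskip

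\noindent\textbf{Main obstacle.}
The crux is Step~2: extracting a \emph{genuine limit} (not merely boundedness) of the Hessian at infinity. Boundedness of $D^2u$ alone does not force convergence; one really needs the one-sided sign on $a^{ij}\partial_{ij}(u_{ee})$ coming from concavity of $F$, together with a careful dyadic-annulus version of the Krylov--Safonov weak Harnack inequality whose constants are scale-invariant so that the oscillation decay compounds geometrically. Handling the ``convex level set'' case uniformly with the concave/convex cases, and verifying that the linearized coefficients really do converge (rather than just stay elliptic) so that Theorem~\ref{thm.ELT-LEE-ND} applies, are the technical points that require the most care.
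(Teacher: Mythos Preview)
Your overall architecture---prove $D^2u\to A$, linearize, invoke Theorem~\ref{thm.ELT-LEE-ND}, then bootstrap---matches the paper's. But three of your four steps contain genuine errors or gaps.

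\textbf{Step 1 is false and should be deleted.} The gradient $Du$ cannot converge to a constant $b$: the conclusion of the theorem itself says $Du(x)=Ax+b+O(|x|^{1-n})$, which diverges. Your Moser--Harnack sketch fails because $u_e$ has linear growth, so the oscillation on $B_{2R}\setminus B_R$ is of order $R$, and applying Harnack to $M_R-v$, $v-m_R$ yields no decay across scales. The paper never attempts this step; the linear part $b$ is recovered only \emph{after} $A$ is known, by applying Corollary~\ref{cor.limit-finite.Dv} to $v_e$ where $v=u-\tfrac12x^TAx$.

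\textbf{Step 2 misidentifies the mechanism.} For concave $F$, $u_{ee}$ is a \emph{subsolution} ($\widehat a^{ij}(u_{ee})_{ij}\ge0$), not a supersolution. More seriously, weak Harnack on a one-sided (sub- or super-) solution over dyadic annuli does \emph{not} yield geometric oscillation decay; that requires the full Harnack inequality, which you do not have for $u_{ee}$ alone. The paper's proof of $D^2u\to A$ (Lemma~\ref{lem.D2u->A.FNL}) is a contradiction argument that uses \emph{both} ingredients simultaneously: assuming $\varlimsup u_{ee}-\varliminf u_{ee}=5d>0$, one locates a sphere $\partial B_{|\underline x|}$ carrying points $\underline x,\overline x$ near the $\varliminf$ and $\varlimsup$ respectively; Evans--Krylov applied to the full Hessian on $B_{|\underline x|-1}(\underline x)$ pins $u_{ee}$ below $\varlimsup-3d$ on a ball $B_{\gamma|\underline x|}(\underline x)$; then weak Harnack applied to the supersolution $\overline w+\varepsilon-u_{ee}$ on the annulus forces $3d\le C\varepsilon$, contradiction. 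Evans--Krylov is used \emph{inside} this argument, not merely to upgrade the rate afterward.

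\textbf{Step 3 has a gap: you have not shown $w_e$ is bounded.} To apply Theorem~\ref{thm.ELT-LEE-ND} to $C\pm w_e$ you need $w_e$ bounded, but from $D^2w=O(|x|^{-\alpha})$ with Evans--Krylov's $\alpha\in(0,1)$ you only get $|w_e(x)|=O(|x|^{1-\alpha})$, which diverges. The paper fills this gap by a barrier argument \emph{before} invoking the linear Liouville theorem: since $v_{ee}\to0$ is a subsolution of $\widehat a^{ij}\partial_{ij}=0$ with coefficients converging to $F_{M_{ij}}(A)$, comparison with the supersolution $|x|^{-1/2}$ gives $v_{ee}\le C|x|^{-1/2}$; ellipticity then gives $|D^2v|\le C|x|^{-1/2}$, hence $|\widehat a^{ij}-\delta^{ij}|\le C|x|^{-1/2}$; a second comparison with $2|x|^{2-n}-|x|^{2-n-1/4}$ upgrades this to $|D^2v|\le C|x|^{2-n}$. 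Only then does $|Dv_e|\le C|x|^{-1}$ hold, which is exactly the hypothesis of Corollary~\ref{cor.limit-finite.Dv}. The same corollary is applied once more to $v-b^Tx$ to extract $c$; ``integrating once more'' as you write would only give boundedness, not the $O(|x|^{2-n})$ rate.

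Your Step~4 is essentially correct and matches the paper's rescaling-plus-Schauder argument.
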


By symmetry, we discuss only the case that $F$ is concave,
which implies that the pure second derivative $u_{ee}$,
for any fixed direction $e\in\p B_1$,
is a subsolution of the linearized equation
$F_{M_{ij}}(D^2u(x))v_{ij}=0$ of \eqref{eqn.FNL}.
Before going further, we first collect here some preliminary results,
for their proofs one may consult \cite{CC95} and \cite{GT98}.
\begin{enumerate}[\quad(1)]
\item
(Krylov-Safonov's weak Harnack inequality in annulus)
Let $v$ be a nonnegative supersolution of $a^{ij}(x)v_{ij}=0$
in $B_{(1+3\gamma)R}\setminus\ol B_{(1-3\gamma)R}$,
where $a^{ij}(x)$ is uniformly elliptic
with the ellipticity constants $\lambda$ and $\Lambda$,
and $0<\gamma<1/3$ is a constant. Then
\[\left(\frac{1}{|B_{\gamma R}|}\int_{B_{(1+\gamma)R}\setminus
\ol B_{(1-\gamma)R}}v^\delta\right)^{1/\delta}
\leq C\inf_{B_{(1+\gamma)R}\setminus\ol B_{(1-\gamma)R}}v,\]
where $\delta=\delta(n,\lambda,\Lambda)>0$
and $C=C(n,\lambda,\Lambda,\gamma)>0$.

\item
(Evans-Krylov estimate)
Let $u$ be a solution of \cref{eqn.FNL}
and $u_{ee}$ be its pure second derivative
in any fixed direction $e\in\p B_1$.
Then there exist $C=C(n,\lambda,\Lambda)>0$
and $\alpha=\alpha(n,\lambda,\Lambda)>0$, such that
\[\osc_{B_r(z)}u_{ee}\leq\osc_{B_r(z)}D^2u
\leq C\left(\frac{r}{R}\right)^\alpha\osc_{B_R(z)}D^2u
\leq 2CK\left(\frac{r}{R}\right)^\alpha\]
for any $0<r<R$ and any $B_R(z)\subset\R^n\setminus\ol B_1$.
\end{enumerate}

\subsection{Limit of the Hessian}\label{subsec.D2u->A.FNL}
\quad

The key step toward \cref{thm.u=Q+O.FNL} is the following lemma.
\begin{lemma}\label{lem.D2u->A.FNL}
Let $u$ be as in \cref{thm.u=Q+O.FNL}.
Then there exists a symmetric matrix $A$ such that
\[D^2u(x)\ra A\quad\mathrm{as}\quad|x|\ra\infty.\]
\end{lemma}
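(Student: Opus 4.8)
The plan is to exploit the two preliminary tools together with an annular dyadic decomposition. Fix a direction $e\in\p B_1$ and set $w=u_{ee}$; since $F$ is concave, $w$ is a subsolution of the linearized equation $a^{ij}(x)v_{ij}=0$ with $a^{ij}(x)=F_{M_{ij}}(D^2u(x))$, which is uniformly elliptic with constants $\lambda,\Lambda$ because $\norm{D^2u}_{L^\infty}\leq K$. Then $M_R:=\sup_{\p B_R}w$ is a nondecreasing-in-the-right-sense quantity; more precisely, I will first show that the \emph{oscillation} of $w$ over the annulus $B_{2R}\setminus\ol B_R$ tends to $0$ as $R\to\infty$. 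The Evans-Krylov estimate gives, for any ball $B_r(z)\subset B_{2R}\setminus\ol B_R$ of radius comparable to $R$ (say $r=R/8$, using a larger concentric ball $B_{R/2}(z)\subset\R^n\setminus\ol B_1$), the bound $\osc_{B_r(z)}D^2u\leq 2CK(r/(R/2))^\alpha=2CK\,4^{-\alpha}$—which is only a uniform bound, not yet decay. To upgrade this to decay I iterate: starting from a fixed large radius $R_0$ and applying Evans-Krylov on the scale going from $B_{R_j}$ out to $B_{R_{j+1}}$ with $R_{j+1}=2R_j$, one gets $\osc_{B_{R_{j+1}/8}(z)}D^2u\leq C 2^{-\alpha j}\osc_{B_{R_0}(z_0)}D^2u$ for suitable chains of balls, hence $\osc_{\p B_R}D^2u\to 0$ as $R\to\infty$. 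Covering $\p B_R$ by finitely many (a number independent of $R$) such balls of radius $\sim R$ controls the oscillation of $D^2u$ on the whole sphere $\p B_R$ by $C R^{-\alpha}$ times a fixed constant.

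Next I pass from decaying oscillation on spheres to an actual limit. Since $\osc_{\p B_R}D^2u\le C R^{-\alpha}$, in particular $\osc_{\p B_R}w\le CR^{-\alpha}$, and I want to compare $\sup_{\p B_R}w$ across different radii $R$. Here Krylov-Safonov's weak Harnack inequality in the annulus enters, applied to the nonnegative supersolution $v=M_{2R}-w$ (or $v=-\inf$ of $w$ over a large annulus minus $w$, shifted to be nonnegative using the global bound $|w|\le K$) on $B_{(1+3\gamma)R}\setminus\ol B_{(1-3\gamma)R}$: it shows that the infimum and the (weighted) $L^\delta$-average of $v$ over the inner annulus are comparable, which, combined with the smallness of the oscillation, forces $\sup_{\p B_R}w$ to be a Cauchy net as $R\to\infty$. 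Thus $w(x)=u_{ee}(x)$ converges to a limit $a_{ee}\in\R$ as $|x|\to\infty$, for every fixed $e\in\p B_1$. Polarizing (writing $u_{e_i e_j}$ via pure second derivatives in the directions $e_i,e_j,e_i\pm e_j$) gives convergence of every entry of $D^2u(x)$ to a finite value; assembling these into a symmetric matrix $A$ yields $D^2u(x)\to A$ as $|x|\to\infty$.

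The main obstacle is the very first step: the Evans-Krylov estimate as stated only yields a \emph{uniform} bound on $\osc D^2u$ over balls of radius comparable to their distance from the origin, not decay, because the ratio $r/R$ is bounded below once we insist $B_R(z)\subset\R^n\setminus\ol B_1$. The trick that makes it work is to not ask for decay in one shot but to \emph{iterate Evans-Krylov across a geometric sequence of scales}, so that the factor $(r/R)^\alpha<1$ compounds to $2^{-\alpha j}\to 0$; one must be careful that the chain of balls stays in the exterior domain and that the number of balls needed to cover each sphere $\p B_R$ stays bounded uniformly in $R$ (which it does, by scaling). Once that decay is in hand, the weak Harnack step and the polarization are routine, and in fact this argument simultaneously prepares the quantitative rate $O(|x|^{2-n})$ needed in \cref{thm.u=Q+O.FNL}, since $D^2u-A$ then satisfies the linearized equation with a right-hand side that decays like a negative power of $|x|$.
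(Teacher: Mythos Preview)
Your iteration step is where the argument breaks. Evans--Krylov says
\[
\osc_{B_r(z)}D^2u\;\le\;C\Big(\frac{r}{R}\Big)^{\alpha}\osc_{B_R(z)}D^2u,
\qquad B_R(z)\subset\R^n\setminus\ol B_1,
\]
so the gain factor $(r/R)^\alpha$ compounds only as you \emph{shrink} toward a fixed center. In the exterior domain the largest admissible ball at $z$ has radius $|z|-1$; hence a single application at $z\in\partial B_R$ gives
$\osc_{B_r(z)}D^2u\le C\big(r/R\big)^\alpha\cdot 2K$, and if you want a bounded number of balls to cover $\partial B_R$ you must take $r\sim R$, which kills the factor. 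Any ``chain of balls'' across dyadic annuli is forced to nest inward (the small ball sits inside the big one), so iterating yields $\osc_{B_{R_0}(z_0)}D^2u\le C\,2^{-\alpha j}\osc_{B_{R_j}(z_j)}D^2u$, i.e.\ decay toward the origin, not toward infinity. In short, Evans--Krylov alone cannot produce $\osc_{\partial B_R}D^2u\le CR^{-\alpha}$; that inequality is essentially equivalent to the lemma you are trying to prove.

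The paper does not try to separate the two tools into ``first get oscillation decay, then get the limit''. It argues by contradiction: if $\ol w:=\varlimsup w>\varliminf w=:\ul w$, then since $w=u_{ee}$ is a \emph{sub}solution, the maximum principle guarantees that on the sphere through some low point $\ul x$ (with $w(\ul x)\approx\ul w$) there is also a high point $\ol x$ (with $w(\ol x)\approx\ol w$). Evans--Krylov is invoked \emph{once}, at a fixed small ratio $\gamma=\gamma(n,\lambda,\Lambda,K,\ol w-\ul w)$, to spread the low value over a ball $B_{\gamma|\ul x|}(\ul x)$ of proportional size. Then the weak Harnack inequality on the annulus, applied to the nonnegative supersolution $\ol w+\ve-w$, bounds its $L^\delta$ average (which feels the large low ball, where $\ol w-w\gtrsim\ol w-\ul w$) by its infimum (which is $\le 2\ve$ at $\ol x$). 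Letting $\ve\to0$ gives the contradiction. So Evans--Krylov supplies the \emph{volume} of low values and weak Harnack links that volume to the single high point; neither step stands on its own, and in particular no iteration is needed.
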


To prove this, we need only to show that,
for any fixed $e\in\p B_1$,
the pure second derivative $u_{ee}$
tends to some constant number at infinity.

\begin{proof}[Proof of \cref{lem.D2u->A.FNL}]
Set $w(x)=u_{ee}(x)$, $\dps\ol w=\varlimsup_{|x|\ra\infty}w(x)$
and $\dps\ul w=\varliminf_{|x|\ra\infty}w(x)$ for convenience.
It suffices to prove that
\[\ol w=\ul w.\]
If it were not, we would have $\ol w-\ul w=:5d>0$.
Clearly, for any $0<\ve<d$,
there exists a large constant $R=R(\ve)>1$ such that
$\ul w-\ve\leq w(x)\leq\ol w+\ve$ for all $x\in B_{R/2}^\CC$,
and also there exists a sequence of $\ul x_k$ in $B_{R/2}^\CC$,
tending to $\infty$, such that
\[w(\ul x_k)\leq\ul w+\ve\]
for all $k\in\Z^+$.
Then there exists a point $\ol x$ on the sphere $\p B_{|\ul x|}$
for at least one $\ul x\in\set{\ul x_k}$, such that
\[w(\ol x)\geq\ol w-\ve.\]
Otherwise, as a subsolution, $w<\ol w-\ve$
on the spheres $\p B_{|\ul x_k|}$
for all $k\in\Z^+$, by comparison principle,
we would have $w(x)<\ol w-\ve$ for all $x\in B_{|\ul x_1|}^\CC$,
which leads to $\ol w<\ol w-\ve$, a contradiction.

Applying the Evans-Krylov estimate
to $w=u_{ee}$ in $B_{|\ul x|-1}(\ul x)$,
we obtain
\[\osc_{B_{\gamma|\ul x|}(\ul x)}u_{ee}
\leq C\left(\frac{\gamma|\ul x|}{|\ul x|-1}\right)
^\alpha\osc_{B_{|\ul x|-1}(\ul x)}D^2u
\leq4CK\gamma^\alpha\leq d,\]
where $\dps\gamma=\gamma(n,\lambda,\Lambda,K,d)
=:\min\set{1/6,({d}/(4CK))^{1/\alpha}}$.
Thus we deduce that
\[w(x)\leq\ul w+\ve+d\leq\ol w-3d
 \quad\mathrm{or}\quad
 \ol w-w(x)\geq3d 
\quad\mathrm{for}\quad
x\in B_{\gamma|\ul x|}(\ul x).\]
Employing the weak Harnack inequality
to the nonnegative supersolution $v(x)=\ol w+\ve-w(x)$ in the annulus
$B_{(1+3\gamma)|\ul x|}\setminus\ol B_{(1-3\gamma)|\ul x|}$,
we obtain
\[\left(\frac{1}{|B_{\gamma|\ul x|}|}
\int_{B_{(1+\gamma)|\ul x|}\setminus
\ol B_{(1-\gamma)|\ul x|}}v^\delta\right)^{1/\delta}
\leq C\inf_{B_{(1+\gamma)|\ul x|}
\setminus\ol B_{(1-\gamma)|\ul x|}}v\leq Cv(\ol x)\leq 2C\ve.\]
Then $3d\leq 2C\ve,$ where $C$ is independent of $\ve.$
Letting $\ve\ra0$, we get $d=0$, a contradiction.
\end{proof}

\subsection{Finer asymptotic behavior}\label{subsec.u=Q+O.FNL}
\quad

Once the second order derivatives of $u$
in \cref{thm.u=Q+O.FNL} have limits at infinity,
we can get the asymptotic behavior for all other order derivatives of $u$.
To this end, we first note that auxiliary functions
$|x|^{-n}$, and $|x|^{-1/2}$ as well as $|x|^{2-n}-|x|^{2-n-\ve}$
are indeed subsolution and supersolutions, respectively,
to the linearized equations of $F(D^2u)=0$,
which now are close to constant coefficient ones,
say the Laplace equation, near infinity.

\medskip

Next we prove an exterior Liouville theorem for positive solutions
to linear elliptic equations in nondivergence form.
\begin{theorem}\label{thm.ELT-LEE-ND}
Let $v$ be a positive solution
of $a^{ij}(x)v_{ij}=0$ in $\R^n\setminus\ol B_1$,
where $n\geq3$, $a^{ij}(x)$ is uniformly elliptic and
$a^{ij}(x)\ra a_\infty^{ij}$ as $|x|\ra\infty$.
Then there exists a constant $v_\infty$ such that
\begin{equation}\label{eqn.n-2-d.v}
v(x)=v_\infty+o\left(\frac{1}{|x|^{n-2-\delta}}\right)
~\text{as}~|x|\ra\infty,~\text{for all}~\delta>0.
\end{equation}
Furthermore, if we have in addition
\begin{equation}\label{eqn.aij-Holder}
|a^{ij}(x)-a_\infty^{ij}|\leq\frac{C}{|x|^\alpha}
\quad(x\in\R^n\setminus B_1)
\end{equation}
for some positive constants $C$ and $\alpha$, then
\begin{equation}\label{eqn.n-2.v}
v(x)=v_\infty+O\left(\frac{1}{|x|^{n-2}}\right)~\text{as}~|x|\ra\infty.
\end{equation}
\end{theorem}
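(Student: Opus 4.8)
The plan is to normalize and then run three steps. A linear change of variables makes $a_\infty^{ij}=\delta^{ij}$; this turns $\R^n\setminus\ol B_1$ into the exterior of an ellipsoid, hence into a set containing $\R^n\setminus\ol B_{R_0}$ for some $R_0>1$, and it preserves the stated decay rates. From the identity $a^{ij}(|x|^{-\beta})_{ij}=\beta|x|^{-\beta-2}\big[(\beta+2)|x|^{-2}a^{ij}x_ix_j-\operatorname{tr} a\big]$ together with $a^{ij}\ra\delta^{ij}$ (resp.\ \eqref{eqn.aij-Holder}) one checks, for $R_0$ large: for every $\gamma\in(0,n-2)$ the function $|x|^{-\gamma}$ is a supersolution and $1-R_0^{\gamma}|x|^{-\gamma}$ a subsolution of $a^{ij}v_{ij}=0$ on $\{|x|\geq R_0\}$; and, under \eqref{eqn.aij-Holder}, for every $\ve\in(0,\alpha)$ the function $|x|^{2-n}-|x|^{2-n-\ve}$ is a strict supersolution on $\{|x|\geq R_1\}$ for some $R_1=R_1(\ve)$, because the $O(|x|^{-n-\alpha})$ contribution of $|x|^{2-n}$ is beaten by the negative $-c\,|x|^{-n-\ve}$ contribution of $|x|^{2-n-\ve}$.

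\emph{Step 1 (boundedness), which I expect to be the main obstacle.} Suppose $\varlimsup_{|x|\ra\infty}v=+\infty$ and pick $x_k$ with $r_k:=|x_k|\ra\infty$ and $v(x_k)\ra\infty$. Rescaling $v(r_k\,\cdot\,)$ to the fixed annulus $B_2\setminus\ol B_{1/2}$ and invoking the Harnack inequality yields $m_k:=\min_{\p B_{r_k}}v\geq c_0^{-1}v(x_k)\ra\infty$, with $c_0=c_0(n,\lambda,\Lambda)$. On the bounded annulus $B_{r_k}\setminus\ol B_{R_0}$ the function $m_k\big(1-R_0^{\gamma}|x|^{-\gamma}\big)$ is a subsolution which vanishes on $\p B_{R_0}$ and is $\leq m_k\leq v$ on $\p B_{r_k}$, while $v$ is a supersolution there; comparison gives $v(x)\geq m_k\big(1-(R_0/|x|)^{\gamma}\big)$, and letting $k\ra\infty$ with $|x|>R_0$ fixed forces $v(x)=+\infty$, absurd. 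Hence $\ol v:=\varlimsup_{|x|\ra\infty}v$ and $\ul v:=\varliminf_{|x|\ra\infty}v$ are finite; in particular $v$ is bounded.

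\emph{Step 2 (existence of the limit).} To see $\ol v=\ul v$ I repeat the argument of the proof of \cref{lem.D2u->A.FNL} with $v$ replacing $u_{ee}$ and the Krylov--Safonov interior $C^{\alpha}$ estimate for solutions of $a^{ij}v_{ij}=0$ replacing the Evans--Krylov estimate: assuming $\ol v-\ul v=:5d>0$, pick $\ul x_k\ra\infty$ with $v(\ul x_k)\leq\ul v+\ve$; the maximum principle on the annuli between consecutive spheres $\p B_{|\ul x_k|}$ produces, for some $k$, a point $\ol x\in\p B_{|\ul x_k|}$ with $v(\ol x)\geq\ol v-\ve$ (otherwise $v<\ol v-\ve$ near infinity); the $C^{\alpha}$ estimate makes $\osc_{B_{\gamma|\ul x_k|}(\ul x_k)}v\leq d$, so $v\leq\ul v+\ve+d$ on that ball; and the weak Harnack inequality applied to the nonnegative supersolution $\ol v+\ve-v$ on $B_{(1+3\gamma)|\ul x_k|}\setminus\ol B_{(1-3\gamma)|\ul x_k|}$ gives $4d\leq 2C\ve$, whence $d=0$ on letting $\ve\ra0$. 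Thus $v(x)\ra v_\infty$ as $|x|\ra\infty$ for a unique constant $v_\infty$.

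\emph{Step 3 (the decay rates).} Put $V:=v-v_\infty$ and $\sigma(R):=\sup_{\p B_R}|V|\ra0$. Given $\delta>0$ (only $0<\delta<n-2$ needs an argument, the rest following from $v\ra v_\infty$), set $\gamma:=n-2-\delta/2\in(0,n-2)$; since $\sigma(R)(R/|x|)^{\gamma}$ is a supersolution on $\{|x|>R\}$ for $R\geq R_0$, equals $\sigma(R)\geq|V|$ on $\p B_R$, and tends to $0$ at infinity, the maximum principle on the exterior of $B_R$ gives $|V(x)|\leq\sigma(R)(R/|x|)^{\gamma}$ for $|x|\geq R$; choosing $R=|x|^{\theta}$ with $\theta:=(\delta/2)/(n-2-\delta/2)$ yields $|V(x)|\leq\sigma(|x|^{\theta})\,|x|^{2-n+\delta}$, which is \eqref{eqn.n-2-d.v}. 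Under \eqref{eqn.aij-Holder}, comparing $V$ and $-V$ with $K\big(|x|^{2-n}-|x|^{2-n-\ve}\big)$ on $\{|x|>R_1\}$ (with $0<\ve<\alpha$ and $K$ chosen so that this dominates $|V|$ on $\p B_{R_1}$) gives $|V(x)|\leq K|x|^{2-n}$ for $|x|\geq R_1$, which is \eqref{eqn.n-2.v}. Steps 2 and 3 are thus, respectively, a transcription of an argument already in the paper and routine barrier comparisons, the real work being the exclusion of blow-up in Step 1.
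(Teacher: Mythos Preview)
Your proof is correct, but it diverges from the paper's in the first two steps. For the exclusion of blow-up the paper argues by contradiction that $\varliminf v<+\infty$: assuming $v\to+\infty$, it compares the subsolution $|x|^{-n}$ against the supersolution $2|x|^{-1/2}-1+\ve v$ on $B_{R_\ve}\setminus B_1$, evaluates at a fixed interior point, and lets $\ve\to0$. Your route (Harnack to force $\min_{\p B_{r_k}}v\to\infty$, then comparison with the subsolution $m_k(1-(R_0/|x|)^\gamma)$) is just as clean and avoids the somewhat ad hoc pair $|x|^{-n},\,|x|^{-1/2}$. For the existence of the limit the paper is shorter: once $\ul v<\infty$, it applies the \emph{full} Harnack inequality to the positive solution $\wt v=v-\ul v+\ve$ on annuli around a sequence $r_k\to\infty$ with $\wt v(x_k)\leq2\ve$, getting $\wt v\leq2C\ve$ on each $\p B_{r_k}$, and then the maximum principle on the intermediate annuli gives $\wt v\leq2C\ve$ on all of $B_{r_1}^\CC$; sending $\ve\to0$ yields $\ol v\leq\ul v$. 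Your transcription of the \cref{lem.D2u->A.FNL} machinery also works---$v$ is a genuine solution, so both the maximum principle and the Krylov--Safonov $C^\alpha$ estimate are available---but the positivity of $v$ makes the oscillation-on-a-small-ball step unnecessary here. Your Step~3 barrier comparisons match the paper's; the only cosmetic difference is your choice $R=|x|^\theta$ to package the $o(\cdot)$, whereas the paper fixes the inner radius and absorbs an $\ve$.
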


\begin{proof}
Without loss of generality and for simplicity of notations,
we assume that $a_\infty^{ij}=\delta_{ij}$
and, say $|a^{ij}(x)-\delta_{ij}|\leq1/4$ for $|x|\geq1$.
Note also that the constants $C$'s in the following steps
might be different from line to line.

\medskip

\emph{Step 1.}
We prove $\dps\lim_{|x|\ra\infty}v(x)$
exists and is finite in this step.
Let $\dps\ol v=\varlimsup_{|x|\ra\infty}v(x)$
and $\dps\ul v=\varliminf_{|x|\ra\infty}v(x)$.
Clearly, $\ol v\geq\ul v\geq0$.

We first prove that $\ul v<+\infty$.
Otherwise, we would have
\begin{equation}\label{eqn.v->+8}
v(x)\ra+\infty\quad\mathrm{as}\quad|x|\ra\infty.
\end{equation}
Relying on the equation $a^{ij}(x)w_{ij}=0$,
let us take this solution $v(x)$ and supersolution
$\xi(x)=2|x|^{-1/2}-1$ to bound subsolution $\eta(x)=|x|^{-n}$.
For any $\ve>0$, according to \eqref{eqn.v->+8},
there exists $R_{\ve}>16$ such that
$\ve v(x)>2$ for all $x$ with $|x|\geq R_{\ve}$.
Then $\eta\leq\xi+\ve v$ on $\p B_{R_\ve}\cup\p B_1$.
By the comparison principle, we obtain
$\eta\leq\xi+\ve v$ in $ B_{R_\ve}\setminus B_1$.
In particular, at $x^\ast=(16,0,...,0)$,
\[0<\eta(x^\ast)\leq\xi(x^\ast)+\ve v(x^\ast)=-1/2+\ve v(x^\ast).\]
Letting $\ve\ra0+$, we get $0\leq-1/2$, a contradiction.

Now we prove that $\ol v\leq\ul v$.
For any $\ve>0$, there exists $R_\ve>0$ such that
$\wt v(x)=v(x)-\ul v+\ve>0$ for all $x\in B_{R_\ve}^\CC$.
Since $\dps\varliminf_{|x|\ra\infty}\wt v(x)=\ve$,
there exist $\set{x_k}_{k=1}^{\infty}$ such that
$2R_\ve\leq r_k=|x_k|\ra+\infty$, $r_k<r_{k+1}$ and $\wt v(x_k)\leq2\ve$.
Applying the Krylov-Safonov's Harnack inequality to $\wt v$,
we obtain $\wt v(x)\leq C\wt v(x_k)\leq 2C\ve$
for all $x\in\p B_{r_k}$ and all $k\in\Z^+$.
By the comparison principle, we have
$\wt v(x)\leq 2C\ve$ for all $x\in B_{r_1}^\CC$.
By letting $|x|\ra\infty$ and taking limit superior, we get
$\ol v-\ul v+\ve\leq2C\ve$ for any $\ve>0$.
Letting $\ve\ra0$, we obtain $\ol v\leq\ul v$.

Therefore, $v(x)$ tends to some finite constant $v_\infty$ as $|x|\ra\infty$.

\medskip

To obtain the finer asymptotic behavior
\eqref{eqn.n-2-d.v} and \eqref{eqn.n-2.v},
we follow the arguments of \cite[pp. 324--325]{GS56}
in the rest of the proof.

\medskip

\emph{Step 2.}
Let $\wt\delta=\min\set{\delta/2,(n-2)/2}$.
Consider the supersolution $\phi(x)=|x|^{2-n+\wt\delta}$
to $a^{ij}(x)\phi_{ij}=0$ in $\ol B_{R_\delta}^\CC$.
Since, for any $\ve>0$, there exists $R_\ve>1$,
depending on $\ve$ and $v$, such that
\[|v(x)-v_\infty|<\ve/2,~x\in B_{R_\ve}^\CC,\]
we conclude that there exists $C>0$,
depending on $v$, but independent of $\ve$, such that
\[v(x)-v_\infty\leq C\phi+\ve,
~x\in\p B_{R}\cup\p B_{R_\delta},~R>\max\set{R_\delta,R_\ve}.\]
Applying the comparison principle, we get
\[v(x)-v_\infty\leq C\phi+\ve,~x\in B_{R}\setminus B_{R_\delta}.\]
By letting first $R\ra+\infty$ and then $\ve\ra0+$, we deduce that
\[v(x)-v_\infty\leq C\phi\leq C|x|^{2-n+\wt\delta},~x\in B_{R_\delta}^\CC.\]

Similarly, by considering $v_\infty-v(x)$, we have
\[v(x)-v_\infty\geq-C\phi\geq-C|x|^{2-n+\wt\delta},~x\in B_{R_\delta}^\CC.\]
In summary, the assertion \eqref{eqn.n-2-d.v} is proved.

\medskip

\emph{Step 3.}
In light of the H\"{o}lder continuity condition \eqref{eqn.aij-Holder}
for the coefficient $a^{ij}(x)$ at infinity, as noted in the beginning,
function $\wt\phi(x)=|x|^{2-n}-|x|^{2-n-\alpha/2}$
is a suersolution of $a^{ij}(x)\wt\phi_{ij}=0$ in $\ol B_{R_\alpha}^\CC$.
By taking $\wt\phi$ in place of $\phi$
and following the same lines as in \emph{Step 2},
we conclude that
\[|v(x)-v_\infty|\leq C\wt\phi\leq C|x|^{2-n},~x\in B_{R_\alpha}^\CC,\]
the optimal asymptotic behavior \eqref{eqn.n-2.v}.
This finishes the proof of the lemma.
\end{proof}

\begin{corollary}\label{cor.limit-finite.Dv}
Let $v$ be a smooth solution
of $a^{ij}(x)v_{ij}=0$ in $\R^n\setminus\ol B_1$,
where $n\geq3$, $a^{ij}(x)$ is uniformly elliptic and
$a^{ij}(x)\ra a_\infty^{ij}$ as $|x|\ra\infty$.
Suppose $|Dv(x)|=O(|x|^{-1})$ as $|x|\ra\infty$.
Then there exists a constant $v_\infty$ such that
\begin{equation}\label{eqn.n-2-d.v-Dv}
v(x)=v_\infty+o\left(\frac{1}{|x|^{n-2-\delta}}\right)
~\text{as}~|x|\ra\infty,~\text{for all}~\delta>0.
\end{equation}
Furthermore, if we have in addition
\[|a^{ij}(x)-a_\infty^{ij}|\leq\frac{C}{|x|^\alpha}
\quad(x\in\R^n\setminus B_1)\]
for some positive constants $C$ and $\alpha$, then
\begin{equation}\label{eqn.n-2.v-Dv}
v(x)=v_\infty+O\left(\frac{1}{|x|^{n-2}}\right)~\text{as}~|x|\ra\infty.
\end{equation}
\end{corollary}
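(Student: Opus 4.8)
The plan is to deduce the corollary from \cref{thm.ELT-LEE-ND} by the standard trick of differentiating the equation and applying the theorem to a translated solution. The point is that \cref{thm.ELT-LEE-ND} requires \emph{positivity} of the solution, which $v$ itself need not satisfy; the gradient decay hypothesis $|Dv(x)|=O(|x|^{-1})$ is exactly what lets us recover positivity after subtracting a constant. First I would observe that the gradient bound implies, by integrating along radial rays from a fixed large sphere, that $v$ has at most logarithmic growth: $|v(x)|\leq C\log|x|+C$ for $|x|$ large. This is not yet enough to conclude a finite limit directly, so instead I would fix a large radius $R_0>1$ and, for each point $x$ with $|x|=r$ large, integrate $Dv$ from a fixed reference point on $\partial B_{R_0}$; combined with the Krylov--Safonov Harnack inequality on annuli applied to suitable translates, one sees the oscillation of $v$ on $\partial B_r$ tends to zero. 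Actually the cleanest route is the following.

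\emph{Step 1.} Set $m=\inf_{\R^n\setminus\ol B_2}v$. A priori $m$ could be $-\infty$, but the bound $|v(x)|\le C\log|x|+C$ shows $v$ is bounded below on any annulus, and the gradient decay shows that $v(x)-v(y)=\int$ along a path, so $|v(x)-v(y)|\le C$ whenever $|x|,|y|\ge 3$ and $|x|\le 2|y|$ — in fact the path can be taken to stay in the region $\{|z|\ge 3\}$ and have length $\le C\max(|x|,|y|)$ while $|Dv|\le C/|z|\le C/\min(|x|,|y|)$, so the increment is bounded by a constant independent of $x,y$. Hence $v$ is globally bounded on $\R^n\setminus\ol B_3$, say $|v|\le M_0$. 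Now apply \cref{thm.ELT-LEE-ND} to the positive solution $\tilde v(x)=v(x)+2M_0$ of $a^{ij}\tilde v_{ij}=0$ in $\R^n\setminus\ol B_3$ (after rescaling to put the exterior domain in the form $\R^n\setminus\ol B_1$, which does not affect the ellipticity constants or the decay hypothesis on the coefficients). That theorem yields a constant $\tilde v_\infty$ with $\tilde v(x)=\tilde v_\infty+o(|x|^{-(n-2-\delta)})$, and under the H\"older hypothesis on $a^{ij}$, with $O(|x|^{-(n-2)})$. Setting $v_\infty=\tilde v_\infty-2M_0$ gives \eqref{eqn.n-2-d.v-Dv} and, in the H\"older case, \eqref{eqn.n-2.v-Dv}.

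\emph{Step 2 (the only delicate point).} The genuine work is establishing the global boundedness of $v$ claimed above, i.e.\ turning the pointwise gradient decay into a uniform $L^\infty$ bound on the whole exterior domain. The obstacle is that a purely radial integration of $|Dv|\le C/|x|$ only gives the logarithmic bound $C\log|x|$; to kill the logarithm one must exploit that $Dv$ decays in \emph{all} directions at rate $|x|^{-1}$, so that moving from $\partial B_r$ to $\partial B_{2r}$ costs $O(1)$ rather than $O(1)$ per dyadic scale accumulating. Concretely: for $|x|=2r$ pick $y$ with $|y|=r$ on the ray through $x$; then $|v(x)-v(y)|\le\int_r^{2r}|Dv(t x/|x|)|\,dt\le\int_r^{2r} C t^{-1}\,dt=C\log 2$. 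This is a \emph{fixed} constant per dyadic annulus, which still sums to $C\log|x|$ — so radial integration alone does \emph{not} suffice, and one must instead use Harnack. The fix is to feed $\tilde v=v-\inf$ on annuli into the Krylov--Safonov Harnack inequality (already recorded in the preliminaries) to see $\sup_{\partial B_r}v-\inf_{\partial B_r}v\to 0$, hence $v$ oscillates negligibly on large spheres; combined with the per-annulus radial estimate $|v(x)-v(2x)|\le C$ this pins $v$ between two sequences and forces it to stay bounded. Once boundedness is in hand, Step~1 finishes the proof. I expect Step~2 — the Harnack-plus-radial argument producing the uniform bound — to be the main content; everything else is a direct appeal to \cref{thm.ELT-LEE-ND}.
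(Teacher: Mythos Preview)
Your reduction to \cref{thm.ELT-LEE-ND} is the right idea, but the boundedness argument in Steps~1--2 has a genuine gap. The implication ``$|v(x)-v(y)|\le C$ whenever $|x|,|y|\ge 3$ and $|x|\le 2|y|$, hence $v$ is globally bounded'' is false: iterating across dyadic scales only yields $|v(x)|\le C\log|x|$, and indeed $\log|x|$ satisfies $|D(\log|x|)|=|x|^{-1}$ together with every comparable-radius estimate you wrote, yet is unbounded. Your Step~2 recognizes this logarithmic obstruction, but the proposed Harnack fix cannot close it. Harnack on annuli holds for \emph{any} uniformly elliptic operator and never uses the hypothesis $a^{ij}\to a^{ij}_\infty$; since $\log|x|$ solves the uniformly elliptic equation $(\delta_{ij}+(n-2)x_ix_j|x|^{-2})v_{ij}=0$ (cf.\ the Remark following the corollary), no argument built from gradient decay plus Harnack alone can succeed. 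Concretely, even granting $\osc_{\partial B_r}v\to 0$ (trivially true for the radial function $\log|x|$), the per-dyadic bound $|v(x)-v(2x)|\le C\log 2$ still permits $v(x)\sim C\log|x|$; nothing ``pins $v$ between two sequences''.

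The paper sidesteps this by asking for much less: to invoke \cref{thm.ELT-LEE-ND} one only needs $v$ bounded on \emph{one} side. This is obtained by contradiction. If $v$ were unbounded both above and below, the intermediate value theorem would produce a sequence $x_k$ with $|x_k|\nearrow\infty$ and $v(x_k)=0$; integrating $|Dv|\le C/|x|$ along minor great-circle arcs on each sphere $\partial B_{|x_k|}$ gives $|v|\le 2\pi C$ there; the maximum principle then forces $|v|\le 2\pi C$ on every annulus $\ol B_{|x_{k+1}|}\setminus B_{|x_k|}$, contradicting unboundedness. The device you are missing is this intermediate-value step, which anchors $v$ near zero on an exhausting family of spheres and lets the maximum principle (rather than Harnack) do the work.
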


\begin{proof}
In virtue of \cref{thm.ELT-LEE-ND},
we need only to show that $v$ is bounded at least on one side.

We show this by contradiction
and by following the same way as in the first part
of the proof of \cite[Theorem 4]{GS56}.
Indeed, if $v$ were unbounded on both sides,
there would exist a sequence $\set{x_k}_{k=1}^{\infty}$,
such that $1<|x_k|<|x_{k+1}|\ra+\infty$
and $v(x_k)=0$ for all $k\in\Z^+$.
Then, it follows from
$|Dv(x)|\leq C/|x|$ (for all $x\in B_1^\CC$) that,
for any $k\in\Z^+$ and any $x\in\p B_{|x_k|}$, we have
\[|v(x)|=\left|\int_{\gamma}\frac{dv}{ds}ds\right|
\leq \frac{C}{|x_k|}\cdot2\pi|x_k|=2C\pi,\]
where the integration path $\gamma$ is the minor arc
connecting $x_k$ and $x$ in the great circle of the sphere $\p B_{|x_k|}$.
By the maximum principle, we thus conclude that
$|v(x)|\leq2C\pi$ on $\ol B_{|x_{k+1}|}\setminus B_{|x_k|}$
for all $k\in\Z^+$.
Therefore, $|v(x)|\leq2C\pi$ on $B_{|x_1|}^\CC$,
contradicts the unboundedness assumption.
\end{proof}

\begin{remark}
\cref{cor.limit-finite.Dv} is slightly different from \cite[Theorem 4]{GS56}
which only asserts that the limit $\dps\lim_{|x|\ra\infty}v(x)$ exists,
but does not state that the limit can not be infinity.
We also note that \cite[Lemma 11]{Se65} says that,
for solution to the linear elliptic equation
in divergence form on the exterior domain,
if the limit exists and the dimension $n\geq3$,
then the limit must be finite.
This result does not need the coefficient $a^{ij}(x)$ converges,
that is ``close to the Laplacian'',
but needs the divergence structure of the equation.
For nondivergence equations,
if the coefficient $a^{ij}(x)$ does not converge,
there are counterexamples for the finiteness of the limit;
for example, function $v(x)=\log|x|$
satisfies the nondivergence uniformly elliptic equation
$(\delta_{ij}+(n-2)x_ix_j|x|^{-2})v_{ij}=0$ in $\R^n\setminus\set{0}$.
Furthermore, $v(x)=\log|x|$ also satisfies
a fully nonlinear concave elliptic equation
$F(D^2v)=(n-2)\lambda_{\mathrm{min}}(D^2v)+\Delta v=0$,
which shows that neither $F(D^2v)=0$
nor its linearized equations have divergence structure for $n\geq3$.
\end{remark}

\medskip

Now we proceed with the proof of \cref{thm.u=Q+O.FNL}.
Note that the constants $C>0$ appeared in the following proof
might be different from line to line.
\begin{proof}[\textbf{Proof of \cref{thm.u=Q+O.FNL}}]
\emph{Step 1.}
Let \[v(x)=u(x)-\frac{1}{2}x^TAx,\]
where $A$ comes from \cref{lem.D2u->A.FNL} and satisfies $F(A)=0$.
Then we have
\[F(D^2v+A)=F(D^2u)=0=F(A),\]
\[\ol a^{ij}v_{ij}
=\int_0^1F_{M_{ij}}(tD^2v(x)+A)dt\cdot\p v_{ij}(x)
=F(D^2v+A)-F(A)
=0,\]
\[\wh a^{ij}(v_e)_{ij}
=F_{M_{ij}}(D^2v(x)+A)(v_e)_{ij}=0,\]
and
\begin{eqnarray*}
\wh a^{ij}(v_{ee})_{ij}
&=&F_{M_{ij}}(D^2v(x)+A)(v_{ee})_{ij}\\
&=&-F_{M_{ij},M_{kl}}(D^2v(x)+A)(v_e)_{ij}(v_e)_{kl}\geq 0,
\end{eqnarray*}
for all $e\in\p B_1$, where
\[\ol a^{ij}(x)=\int_0^1F_{M_{ij}}(tD^2v(x)+A)dt
\quad\mathrm{and}\quad
\wh a^{ij}(x)=F_{M_{ij}}(D^2v(x)+A)\]
are uniformly elliptic with the corresponding ellipticity constants
depending only on $n,\lambda$ and $\Lambda$.

\medskip

It is clear that
\[\ol a^{ij}(x)\ra\ol a_\infty^{ij}=F_{M_{ij}}(A)
\quad\mathrm{and}\quad
\wh a^{ij}(x)\ra\wh a_\infty^{ij}=F_{M_{ij}}(A),\]
since $D^2v(x)\ra0$ ($|x|\ra\infty$)
according to \cref{lem.D2u->A.FNL}.
Thus, by assuming without loss of generality
that $\wh a_\infty^{ij}=\delta_{ij}$,
we have the supersolution
\[\varphi(x)=|x|^{-1/2}\]
of $\wh a^{ij}(x)w_{ij}=0$ in $\ol B_{R_0}^\CC$ for some large $R_0>1$.
Since, for any $e\in\p B_1$,
\begin{equation}\label{eqn.vee}
\wh a^{ij}(v_{ee})_{ij}\geq0
\quad\mathrm{and}\quad
v_{ee}(x)\ra0~(|x|\ra\infty),
\end{equation}
we can use $\varphi$ as a barrier function,
as in \emph{Step 2} of the proof of \cref{thm.ELT-LEE-ND},
to conclude that
\[v_{ee}(x)\leq C\varphi(x)\leq C|x|^{-1/2},\]
for some constant $C>0$.
Let $\lambda_{\mathrm{max}}(M)$
and $\lambda_{\mathrm{min}}(M)$ denote
the maximal and minimal eigenvalue of the matrix $M$, respectively.
Then we have
\[\lambda_{\mathrm{max}}(D^2v)(x)\leq C|x|^{-1/2}.\]
On the other hand, since $\ol a^{ij}(x)v_{ij}=0$
and $\ol a^{ij}(x)$ is uniformly elliptic, we get
\[\lambda_{\mathrm{min}}(D^2v)(x)
\geq-C\lambda_{\mathrm{max}}(D^2v)(x)
\geq-C|x|^{-1/2}.\]
Hence we conclude that
\[\left|D^2u(x)-A\right|=\left|D^2v(x)\right|\leq C|x|^{-1/2},\]
which in turn implies
\[\left|\wh a^{ij}(x)-\wh a_\infty^{ij}\right|
\leq C|x|^{-1/2},~x\in B_1^\CC.\]
Thus the function
\[\wt\varphi(x)=2|x|^{2-n}-|x|^{2-n-1/4}\]
is a supersolution of $\wh a^{ij}(x)w_{ij}=0$
in $\ol B_{R_0}^\CC$ for some other large $R_0>1$.
Recalling \eqref{eqn.vee} and using $\wt\varphi$ as a barrier function,
as in \emph{Step 2} or rather \emph{Step 3}
of the proof of \cref{thm.ELT-LEE-ND}, we conclude that
\[v_{ee}(x)\leq C\wt\varphi(x)\leq C|x|^{2-n}.\]
Repeating the argument above, we get
\[\lambda_{\mathrm{max}}(D^2v)(x)\leq C|x|^{2-n},\]
\[\lambda_{\mathrm{min}}(D^2v)(x)
\geq-C\lambda_{\mathrm{max}}(D^2v)(x)
\geq-C|x|^{2-n},\]
and hence
\begin{equation}\label{eqn.D2u-A<CG}
\left|D^2u(x)-A\right|=\left|D^2v(x)\right|
\leq C|x|^{2-n}.
\end{equation}
Therefore,
\begin{equation}\label{eqn.olaij-Holder}
\left|\ol a^{ij}(x)-\ol a_\infty^{ij}\right|
\leq C|x|^{2-n},~x\in B_1^\CC,
\end{equation}
and
\begin{equation}\label{eqn.whaij-Holder}
\left|\wh a^{ij}(x)-\wh a_\infty^{ij}\right|
\leq C|x|^{2-n},~x\in B_1^\CC.
\end{equation}

\medskip

\emph{Step 2.}
We follow the argument in \cite[p. 567]{CL03}
to capture the linear and constant terms
in the asymptotic quadratic polynomial $Q(x)$ for the solution $u(x)$.
For any $e\in\p B_1$,
it follows from \eqref{eqn.D2u-A<CG} that
\[|Dv_e|\leq\left|D^2v(x)\right|
\leq C|x|^{2-n}\leq C|x|^{-1}.\]
Since $\wh a^{ij}(v_e)_{ij}=0$,
by \eqref{eqn.whaij-Holder} and \cref{cor.limit-finite.Dv},
we conclude that there exists a constant $b_e$ such that
\[v_e(x)=b_e+O(|x|^{2-n})~(|x|\ra\infty).\]
Let $b=(b_1,b_2,...,b_n)^T$ and
\[\ol v(x)=v(x)-b^Tx=u(x)-\left(\frac{1}{2}x^TAx+b^Tx\right).\]
Then
\begin{equation}\label{eqn.Du.n-2}
|Du(x)-(Ax+b)|=|D\ol v(x)|=O(|x|^{2-n})~(|x|\ra\infty).
\end{equation}
In particular,
\[\left|D\ol v(x)\right|\leq C|x|^{-1}.\]
Since $\ol a^{ij}\ol v_{ij}=\ol a^{ij}v_{ij}=0$,
by \eqref{eqn.olaij-Holder} and \cref{cor.limit-finite.Dv},
we thus deduce that there exists a constant $c$ such that
\[\ol v(x)=c+O(|x|^{2-n})~(|x|\ra\infty).\]
Let
\[Q(x)=\frac{1}{2}x^TAx+b^Tx+c.\]
Then
\[|u(x)-Q(x)|=|\ol v(x)-c|=O(|x|^{2-n})~(|x|\ra\infty).\]

\medskip

\emph{Step 3.}
For any fixed $x$ with $|x|$ sufficiently large,
let
\[E(y)=\left(\frac{2}{|x|}\right)^2(u-Q)\left(x+\frac{|x|}{2}y\right).\]
Then
\[\ul a^{ij}(y)E_{ij}(y)=F(A+D^2E(y))-F(A)=0,~y\in B_1,\]
where
\[\ul a^{ij}(y)=\int_0^1F_{M_{ij}}(A+tD^2E(y))dt.\]
By the Evans-Krylov estimate (fully nonlinear Schauder estimate)
and the Schauder estimate, we have
\[|D^kE(0)|\leq C_k\norm{E}_{L^\infty(B_1)}
\leq C_k|x|^{-n},~\text{for all}~k\in\N,\]
and hence
\[|D^k(u-Q)(x)|\leq C_k|x|^{2-n-k},~\text{for all}~k\in\N.\]

\medskip

\emph{Step 4.}
The uniqueness of the quadratic polynomial $Q(x)$
can be traced from the above argument.
Another way is the following.
Given the asymptotic behavior of $u(x)$ to $Q(x)$ near infinity,
the difference between any two quadratic asymptotics of the solution $u(x)$
is zero at infinity, in turn, they must be the same.
\end{proof}

\section{Proof of \cref{thm.u=Q+O.SLE}}\label{sec.EBT.SLE}

As in \cite{Y06,Y02}, we first make a transformation of the solution,
or a $U(n)$ rotation of the ambient space
$\C^n=\R^n\times \R^n\supset\set{(x,Du(x))}$,
so that the Hessian of the new potential function is bounded.
By symmetry, we only consider the case $\Theta>(n-2)\pi/2$.
Let $\sum_{i=1}^{n}\theta_i=(n-2)\pi/2+n\vth$
with $\theta_i=\arctan\lambda_i$ and $\vth\in(0,\pi/n)$.
Observe that
\[-\frac{\pi}{2}+n\vth<\theta_i<\frac{\pi}{2}.\]
The first inequality follows from $(n-2)\pi/2+n\vth<\theta_i+(n-1)\pi/2$,
and it enables us to extend $u$ smoothly over $\ol\Omega$ such that
\begin{equation}\label{eqn.D2u-lower-bd.semicv}
D^2u>-\cot(2\vth)I\geq(1-\cot\vth)I.
\end{equation}
We rotate the $(x,y)\in\R^n\times\R^n$ coordinate system
to $(\wt x,\wt y)$ by $\vth$, $\wt z=e^{-\sqrt{-1}\vth}z$,
namely $\wt x=\fc x+\fs y$ and $\wt y=-\fs x+\fc y$
with $(\fc,\fs)=(\cos\vth,\sin\vth)$.
Then $(x,Du(x))$ has a new parametrization
\begin{equation}\label{eqn.rotation-SLE}
\begin{cases}
\wt x=\fc x+\fs Du(x),\\
\wt y=-\fs x+\fc Du(x).
\end{cases}
\end{equation}
By the convexity of $u(x)+\cot\vth\,|x|^2/2$
from \eqref{eqn.D2u-lower-bd.semicv},
we have the distance increasing property
\begin{equation}\label{eqn.dist-incr}
|\wt x-\wt x^\ast|^2
=\sin^2\vth\,\big|\cot\vth\,x+Du(x)-\cot\vth\,x^\ast-Du(x^\ast)\big|^2
\geq\sin^2\vth\,|x-x^\ast|^2.
\end{equation}
We deduce that $x\mapsto\wt x$ is a diffeomorphism from $\R^n$ to $\R^n$
and $\wt\Omega=\wt x(\Omega)$ is a bounded domain
(for more details, see \emph{Step 1} of proofs
for \cref{thm.u=Q+O.QHE} and \cref{thm.u=Q+O.IHHE}).

Next we define the new potential
\begin{eqnarray}\label{eqn.wtu=xuDu.rotation}
\wt u(\wt x)
&=&\int^{\wt x}\wt y\cdot d\wt x
=\int^{x(\wt x)}\langle-\fs x+\fc Du(x),\fc dx+\fs dDu(x)\rangle\nonumber\\
&=&\frac{1}{2}\fc\fs\left(|Du(x(\wt x))|^2-|x(\wt x)|^2\right)
-\fs^2Du(x(\wt x))\cdot x(\wt x)+u(x(\wt x)),
\end{eqnarray}
where we integrated by parts for the last equality.
Note that the above two equivalent integrals are well-defined
for diffeomorphism $x\mapsto\wt x=\fc x+\fs Du(x)$.
It follows that $D\wt u(\wt x)=\wt y=-\fs x+\fc Du(x)$,
and by the chain rule
\begin{eqnarray*}
D^2\wt {u}
&=&(-\fs I+\fc D^2u)(\fc I+\fs D^2u)^{-1}\\
&=&\left(
\begin{matrix}
\tan(\theta_1-\vth)&~&~\\
~&\ddots&~\\
~&~&\tan(\theta_n-\vth)
\end{matrix}
\right)\quad\text{when $D^2u$ is diagonalized.}
\end{eqnarray*}
Therefore $\wt u$ satisfies
\[\sum_{i=1}^n\arctan\lambda_i(D^2\wt {u})=\frac{(n-2)\pi}{2}
\quad\mathrm{and}\quad
|D^2\wt u|<\cot\vth\quad\mathrm{in}\quad\R^n\setminus\wt\Omega.\]

\subsection{Proof of \cref{thm.u=Q+O.SLE} ($n\geq3$)}\label{subsec.EBT.SLE.hd}
\quad

\emph{Step 1.}
Now that $\wt u$ satisfies a uniformly elliptic fully nonlinear equation,
which is also concave by the convexity observation of the level set
\[\set{\lambda\in\R^n\,|\,\text{$\lambda$ satisfying \eqref{eqn.SLE}
with $\Theta=(n-2)\pi/2$}}\] \cite[Lemma 2.1]{Y06}.
Applying \cref{thm.u=Q+O.FNL} or \cref{lem.D2u->A.FNL} to $\wt u$, we obtain
\[D^{2}\wt u(\wt x)\ra\wt A\quad\mathrm{as}\quad|\wt x|\ra\infty\]
for some constant symmetric matrix $\wt A$.

\medskip

\emph{Step 2.}
We claim that
\begin{equation}\label{eqn.labd<cotvth}
\lambda_i(\wt A)<\cot\vth,~\text{for all}~i=1,2,...,n.
\end{equation}
Otherwise, by rotating the $\wt x$-space to make $\wt A$ diagonal,
we may assume that $\wt A_{11}=\cot\vth$.
Then the rotated graph $\set{(\wt x,D\wt u(\wt x))}$
would have the asymptote
\[\wt y_1=\p_1\wt u(\wt x)
=\cot\vth\;\wt x_1+\wt b_1+O(|\wt x|^{2-n})
=\cot\vth\;\wt x_1+O(1)\]
in $\set{(\wt x_1,\wt y_1)}\cap\R^{n}\setminus\ol{\wt\Omega}$,
according to the asymptotic behavior of $D\wt u$ by \cref{thm.u=Q+O.FNL}
(see also \eqref{eqn.Du.n-2}).
Thus we infer that
\begin{eqnarray*}
x_1&=&\wt x_1\cos\vth-\wt y_1\sin\vth\\
&=&\wt x_1\cos\vth-\wt x_1\cot\vth\sin\vth+O(1)=O(1),
\end{eqnarray*}
which states that $\R^n\setminus\ol{\Omega}$ is bounded in the $x_1$-direction
(geometrically, this also means that,
by rotating back to the original $(x_1,y_1)$-space,
the ``gradient'' graph $\set{(x,Du(x))}$ would be inside a vertical strip
of width $O(1)$ around the vertical $y_1$-axis),
a contradiction.

It follows from the above claim that
the matrix $\cos\vth\;I-\sin\vth\;\wt A$ is invertible.
By the explicit formula
\begin{equation}\label{eqn.D2u.ef}
D^2u(x)=\left(\sin\vth\;I+\cos\vth\;D^2\wt u(\wt x)\right)
\left(\cos\vth\;I-\sin\vth\;D^2\wt u(\wt x)\right)^{-1}
\end{equation}
resulting from \eqref{eqn.rotation-SLE}, we conclude that
\[D^2u(x)\ra A\quad(|x|\ra\infty)\]
with
\[A=\left(\sin\vth\;I+\cos\vth\;\wt A\right)
\left(\cos\vth\;I-\sin\vth\;\wt A\right)^{-1}.\]
Thus
\[|D^2u|\leq C(n,\Theta,u)<+\infty
\quad\mathrm{in}\quad\R^{n}\setminus\ol\Omega,\]
and hence the original equation \eqref{eqn.SLE} is also uniformly elliptic.
Applying \cref{thm.u=Q+O.FNL} to $u$,
we complete the proof of \eqref{eqn.SLE.u=Q+ON}.

\subsection{Proof of \cref{thm.u=Q+O.SLE} ($n=2$)}\label{subsec.EBT.SLE.2d}
\quad

\emph{Step 1.}
By rotation \eqref{eqn.rotation-SLE},
we have a harmonic function $\wt u$ satisfying
\[\Delta\wt u=0\quad\mathrm{and}\quad|D^2\wt u|\leq C(\Theta)
\quad\mathrm{in}\quad\R^2\setminus\ol{\wt\Omega}.\]
Set $z=\wt x_1+\sqrt{-1}\,\wt x_2$.
Then the holomorphic function
\[h(z)=\p_{\wt x_1}\wt u-\sqrt{-1}\,\p_{\wt x_2}\wt u\]
has linear growth at infinity.
By the Laurent expansion, we obtain
\begin{equation}\label{eqn.h(z).Laurent}
h(z)=a_{1}z+a_{0}+a_{-1}z^{-1}+a_{-2}z^{-2}+\cdots
\end{equation}
for all large $z$.
Since $\mathrm{Re}\int a_{-1}z^{-1}dz=\mathrm{Re}(a_{-1}\log z)$,
as a part of $\wt u$, is well defined in an exterior domain,
we see that $a_{-1}$ must be a real number.
Thus we have
\begin{equation}\label{eqn.Dwtu.expa}
D\wt u(\wt x)=D\wt Q(\wt x)+a_{-1}D\log{|\wt x|}+O(|\wt x|^{-2})
\quad\mathrm{as}\quad|\wt x|\ra\infty,
\end{equation}
where
\[\wt Q(\wt x)=\frac{1}{2}\wt x^T\wt A\wt x+\wt b^T\wt x\]
with
\begin{equation}\label{eqn.wtA&wtb.def}
\wt A=
\left(
\begin{array}{cc}
\mathrm{Re}\,a_1  & -\mathrm{Im}\,a_1 \\
-\mathrm{Im}\,a_1 & -\mathrm{Re}\,a_1 \\
\end{array}
\right)
\quad\text{and}\quad
\wt b=(\mathrm{Re}\,a_{0},-\mathrm{Im}\,a_{0}).
\end{equation}

Since the Laurent series \eqref{eqn.h(z).Laurent}
for holomorphic function $h(z)$
is allowed to be taken derivatives term by term,
it follows from \eqref{eqn.Dwtu.expa} that
\[D^2\wt u(\wt x)=:\wt A+O(|\wt x|^{-2})\quad\mathrm{as}\quad|\wt x|\ra\infty.\]

\medskip

\emph{Step 2.}
By \eqref{eqn.Dwtu.expa} and the same strip argument
in the proof of \eqref{eqn.labd<cotvth}, we deduce that
\[|\lambda_i(\wt A)|<\cot\vth\]
for $i=1,2$, where $\vth=\Theta/2$.
Thus the matrix $\cos\vth\,I-\sin\vth\,\wt A$ is invertible.
Recall $(\fc,\fs)=(\cos\vth,\sin\vth)$.
By the explicit formula \eqref{eqn.D2u.ef}, we obtain
\begin{eqnarray}\label{eqn.D2u=A+O}
D^2u(x)&=&(\fs I+\fc \wt A+O(|\wt x|^{-2}))
(\fc I-\fs \wt A+O(|\wt x|^{-2}))^{-1}\nonumber\\
&=&(\fs I+\fc \wt A)(\fc I-\fs \wt A)^{-1}+O(|\wt x|^{-2})\nonumber\\
&=:&A+O(|x|^{-2}),
\end{eqnarray}
where in the last equality we used the inequality $C|\wt x|\geq|x|$
resulting from the distance increasing inequality \eqref{eqn.dist-incr}.

Substituting the asymptotic behavior \eqref{eqn.Dwtu.expa} of $D\wt u$
into the inverse rotation formula of \eqref{eqn.rotation-SLE}, we get
\begin{subequations}
\begin{numcases}{}
x=\fc \wt{x}-\fs D\wt{u}(\wt{x})
=(\fc I-\fs \wt{A}-\fs a_{-1}|\wt x|^{-2}I)\wt{x}
-\fs \wt{b}+O(|\wt{x}|^{-2}),\qquad\label{eqn.x.ex}\\
Du(x)=\fs \wt{x}+\fc D\wt{u}(\wt{x})
=(\fs I+\fc \wt{A}+\fc a_{-1}|\wt x|^{-2}I)\wt{x}
+\fc \wt{b}+O(|\wt{x}|^{-2}).\qquad\label{eqn.Du.ex}
\end{numcases}
\end{subequations}

It follows from \eqref{eqn.x.ex} that
\begin{equation}\label{eqn.wtx.ex}
\wt{x}=(\fc I-\fs \wt{A}-\fs a_{-1}|\wt x|^{-2}I)^{-1}(x+\fs \wt{b})+O(|\wt{x}|^{-2}).
\end{equation}
Plugging \eqref{eqn.wtx.ex} into \eqref{eqn.Du.ex}, we obtain
\begin{eqnarray}\label{eqn.Du.log}
Du(x)&=&(\fs I+\fc \wt{A}+\fc a_{-1}|\wt x|^{-2}I)%
(\fc I-\fs \wt{A}-\fs a_{-1}|\wt x|^{-2}I)^{-1}(x+\fs \wt{b})
+\fc \wt{b}+O(|\wt{x}|^{-2})\nonumber\\
&=&\big[(\fs I+\fc \wt{A})(\fc I-\fs \wt{A})^{-1}
+(\fs I+\fc \wt{A})(\fc I-\fs \wt{A})^{-2}\fs a_{-1}|\wt x|^{-2}\nonumber\\
&~&\quad+(\fc I-\fs \wt{A})^{-1}\fc a_{-1}|\wt x|^{-2}\big](x+\fs \wt{b})
+\fc \wt{b}+O(|\wt{x}|^{-2})\nonumber\\
&=&Ax+(\fc I+\fs A)\wt{b}+a_{-1}(\fc +\fs A)(\fc I-\fs \wt{A})^{-1}%
x/|\wt x|^2+O(|\wt{x}|^{-2})\nonumber\\
&=&Ax+b+\frac{a_{-1}(I+A^2)x}{x^T(I+A^2)x}+O(|x|^{-2}),
\end{eqnarray}
where we used
\[1/|\wt x|^2=|(\fc I+\fs A)x+O(1)|^{-2}
=(x^T(\fc I+\fs A)^{2}x)^{-2}+O(|x|^{-3}),\]
and
\[(\cos\vth\,I-\sin\vth\,\wt A)^{-1}
=\cos\vth\,I+\sin\vth\,A
=\cos\big((\theta_1^\ast-\theta_2^\ast)/2\big)\,(I+A^2)^{1/2}\]
with $\theta_i^\ast=\arctan\lambda_i(A)$ for $i=1,2$.

Finally, by integrating \eqref{eqn.Du.log} term by term, we get
\begin{eqnarray}\label{eqn.SLE.u=abc+log+O}
u(x)&=&\frac{1}{2}x^TAx+b^Tx+c
+\frac{a_{-1}}{2}\log x^T(I+A^2)x+O(|x|^{-1})\nonumber\\
&=:&Q(x)+\frac{a_{-1}}{2}\log x^T(I+(D^2Q)^2)x+O(|x|^{-1})
\quad\mathrm{as}\quad|x|\ra\infty.\qquad
\end{eqnarray}

\medskip

\emph{Step 3.}
To calculate the coefficient $a_{-1}$ for the logarithmic term
\[\Gamma(x)=\frac{a_{-1}}{2}\log x^T(I+A^2)x,\]
as in \cite[p. 570]{CL03},
we integrate the algebraic form of equation \eqref{eqn.SLE}
\begin{equation}\label{eqn.SLE.2d.af}
\cos\Theta\,\Delta u+\sin\Theta\,\det D^2u=\sin\Theta.
\end{equation}
We have
\[\int_{E_R\setminus\Omega}\CS\,dx
=\int_{E_R\setminus\Omega}\CC\Delta u+\CS\det D^{2}u\,dx
=\int_{\p(E_R\setminus\Omega)}\CC u_{\nu}+\CS u_{1}(u_{22},-u_{12})\cdot\nu\,ds,\]
where $E_R=\set{x\in\R^2\,|\,x^T(I+A^2)x<R^2}$
and $(\CC,\CS)=(\cos\Theta,\sin\Theta)$.
In view of the asymptotic behaviors
\eqref{eqn.D2u=A+O} and \eqref{eqn.Du.log}, we get
\begin{eqnarray}\label{eqn.2pia-1}
&&\int_{\p\Omega}\CC u_{\nu}+\CS u_{1}(u_{22},-u_{12})\cdot\nu\,ds
+\int_{E_R\setminus\Omega}\CS\,dx\nonumber\\
&=&\int_{\p E_R}\CC(Q+\Gamma)_{\nu}+\CS(Q_{1}+\Gamma_{1})%
(Q_{22}+\Gamma_{22},-Q_{12}-\Gamma_{12})\cdot\nu\,ds+O(R^{-1})\nonumber\\
&=&\int_{\p E_R}\CC Q_{\nu}+\CS Q_{1}(Q_{22},-Q_{12})\cdot\nu\,ds\nonumber\\
&&\quad+\int_{\p E_R}\CC\Gamma_{\nu}+\CS(Q_{1}(\Gamma_{22},-\Gamma_{12})%
+\Gamma_{1}(Q_{22},-Q_{12}))\cdot\nu\,ds+O(R^{-1})\nonumber\\
&=&\int_{E_R}\CC\Delta Q+\CS\det D^{2}Q\,dx+2\pi a_{-1}+O(R^{-1}).
\end{eqnarray}
Letting $R$ go to $\infty$, we obtain
\begin{eqnarray*}
a_{-1}=\frac{1}{2\pi}\left(\int_{\p\Omega}\CC u_{\nu}
+\CS u_{1}(u_{22},-u_{12})\cdot\nu\,ds-\CS|\Omega|\right)=d.
\end{eqnarray*}

We still have to verify the appearance
of the $2\pi a_{-1}$ term in \eqref{eqn.2pia-1}.
Instead of going through the direct, but tricky and long calculation
for the corresponding boundary integral, we use divergence theorem.
Without loss of generality,
we assume $A$ is diagonal with eigenvalues $(\mu_{1},\mu_{2})$.
From the equation $\arctan\mu_{1}+\arctan\mu_{2}=\Theta$, it follows that
\begin{eqnarray}\label{eqn.delta-argument}
&&\cos\Theta\,\Delta\Gamma
+\sin\Theta\,(Q_{22}\Gamma_{11}-2Q_{12}\Gamma_{12}+Q_{11}\Gamma_{22})\nonumber\\
&=&\sqrt{(1+\mu_{1}^{2})(1+\mu_{2}^{2})}
\left(\frac{1}{1+\mu_{1}^{2}}\Gamma_{11}+\frac{1}{1+\mu_{2}^{2}}\Gamma_{22}\right)
\nonumber\\
&=&2\pi a_{-1}\delta_{0}\quad\text{in}\quad\mathbb{R}^{2}.
\end{eqnarray}
Hence%
\begin{eqnarray*}
&&\int_{\partial E_R}\CC\Gamma_\nu
+\CS\big(Q_{1}(\Gamma_{22},-\Gamma_{12})+\Gamma_{1}(Q_{22},-Q_{12})\big)\cdot\nu\,ds\\
&=&\int_{E_R}\cos\Theta\,\Delta\Gamma
+\sin\Theta\,(Q_{22}\Gamma_{11}-2Q_{12}\Gamma_{12}+Q_{11}\Gamma_{22})\,dx
=2\pi a_{-1}.
\end{eqnarray*}

\medskip

\emph{Step 4.}
For any fixed $x$ with $|x|$ sufficiently large,
let
\[E(y)=\left(\frac{2}{|x|}\right)^2(u-Q-\Gamma)\left(x+\frac{|x|}{2}y\right)
\quad\mathrm{and}\quad
\ol\Gamma(y)=\left(\frac{2}{|x|}\right)^2\Gamma\left(x+\frac{|x|}{2}y\right).\]
Then
\[\ol a^{ij}(y)(E+\ol\Gamma)_{ij}(y)
=F(A+D^2E(y)+D^2\ol\Gamma(y))-F(A)=0,~y\in B_1,\]
where
\[\ol a^{ij}(y)=\int_0^1F_{M_{ij}}(A+t(D^2E(y)+D^2\ol\Gamma(y)))dt,\]
and
\begin{equation}\label{eqn.F(M).2d-SLE}
F(M)=\arctan\lambda_1(M)+\arctan\lambda_2(M).
\end{equation}

By the Nirenberg estimate (two dimensional fully nonlinear Schauder estimate)
and the Schauder estimate, we have
\begin{eqnarray*}
|D^kE(0)|
&\leq& C_k(\norm{E}_{L^\infty(B_1)}+\norm{\ol a^{ij}\ol\Gamma_{ij}}_{C^\alpha(B_1)})\\
&\leq& C_k(\norm{E}_{L^\infty(B_1)}+|x|^{-2})
\leq C_k|x|^{-1},~\text{for all}~k\in\N.
\end{eqnarray*}
Therefore
\[|D^k(u-Q-\Gamma)(x)|\leq C_k|x|^{-k-1},~\text{for all}~k\in\N.\]

\medskip

\emph{Step 5.}
The uniqueness of the quadratic polynomial $Q(x)$
is proved in the same way as in \emph{Step 4}
in the proof of \cref{thm.u=Q+O.FNL}.

\begin{remark}\label{rmk.2d-log-linearization}
Once we reach a fast enough rate
for the Hessian of the solution approaching its limit at infinity,
we can reveal the asymptotics of the solution through linearized equations.
By the asymptotic behavior \eqref{eqn.D2u=A+O} of $D^2u$,
we set $v(x)=u(x)-x^TAx/2$.

(i)
From the original trigonometric form equation \eqref{eqn.SLE}, we have
\[\ul a^{ij}(x)v_{ij}(x)=F(A+D^2v(x))-F(A)=0,~x\in \ol B_1^\CC,\]
where $F$ is the one given in \eqref{eqn.F(M).2d-SLE} and
$\ul a^{ij}(x)=\int_0^1F_{M_{ij}}(A+tD^2v(x))dt$.
Since \eqref{eqn.D2u=A+O} reads
\[|D^2v(x)|=O(|x|^{-2})~(|x|\ra\infty),\]
it follows that
\[\mathrm{tr}\left((I+A^2)^{-1}D^2v\right)=F_{M_{ij}}(A)v_{ij}
=\left(F_{M_{ij}}(A)-\ul a^{ij}\right)v_{ij}=O(|x|^{-4})~(|x|\ra\infty).\]
From the Newtonian representation of $v(x)$ as in \cite[p. 569]{CL03},
we deduce that
\[u(x)=\frac{1}{2}x^{T}Ax+v(x)=\frac{1}{2}x^{T}Ax+b^Tx+c+\frac{d}{2}\log x^T(I+A^2)x
+O(|x|^{-1})~(|x|\ra\infty).\]
for some $b\in\R^n$ and $c,d\in\R$.

(ii)
Another way is to consider the algebraic form
of the special Lagrangian equation \eqref{eqn.SLE.2d.af}.
It follows from \eqref{eqn.D2u=A+O}
and the expansion formula of the determinant that
\[\mathrm{tr}\left(\mathcal{M}D^2v\right)
=O(|x|^{-4})~(|x|\ra\infty),\]
where $\mathcal{M}=\cos\Theta\,I+\sin\Theta\,(\det A)A^{-1}$.
Because of (the ``conformality'')
\[\mathcal{M}^{-1}=(\cos\Theta\,I_2+\sin\Theta\,(\det{A})A^{-1})^{-1}
=\frac{1}{\sqrt{\det(I+A^2)}}(I+A^{2}),\]
we obtain the same linearized equation
and the same logarithmic term $\log x^T\mathcal{M}^{-1}x$.

\medskip

Note that, when $\Theta=\pi/2$,
the special Lagrangian equation \eqref{eqn.SLE.2d.af}
becomes the Monge-Amp\`{e}re equation $\det D^2u=1$.
We have $\mathcal{M}=A^{-1}$
and hence the logarithmic term $\log x^TAx$,
which is as same as the one given
in \cite[Theorem 1.2]{CL03} (see also \cref{thm.u=Q+O.MAE}).
\end{remark}

\begin{remark}\label{rmk.2d-log-representation}
Via the rotation argument in Step 1,
we actually have a harmonic representation of the potential $u$
to two dimensional special Lagrangian equations,
which in turn, also leads to the asymptotics of the solution.

Write $z=\wt x_1+\sqrt{-1}\,\wt x_2$, $\wt x=(\wt x_1,\wt x_2)$
and $\wt y=(\wt u_{\wt x_1},\wt u_{\wt x_2})$.
From \eqref{eqn.h(z).Laurent}, we have
\begin{eqnarray*}
u(x)&=&\int^x Du\cdot dx
=\int^x(\fs\wt{x}+\fc\wt{y})\cdot d(\fc\wt{x}-\fs\wt{y})\\
&=&\int^x \fs\fc(\wt{x}d\wt{x}-\wt{y}d\wt{y})+\fc^{2}%
\wt{y}d\wt{x}-\fs^{2}\wt{x}d\wt{y}\\
&=&\int^x\frac{1}{2}\fs\fc d(|\wt{x}|^{2}-|\wt{y}|^{2})
+\mathrm{Re}(\fc^{2}hdz-\fs^{2}zh_{z}dz)\\
&=&\frac{1}{2}\fs\fc(|z|^{2}-|h|^{2})
+\mathrm{Re}\int^z(\fc^{2}h-\fs^{2}zh_{z})dz\\
&=&\frac{1}{2}\fs\fc(|z|^{2}-|h|^{2})
+\frac{1}{2}(\fc^{2}-\fs^{2})\mathrm{Re}(a_{1}z^{2})\\
&&\qquad+\fc^2\mathrm{Re}(a_0z)+a_{-1}\log|z|+O(|z|^{-1}).
\end{eqnarray*}
Since $\wt x=(\cos\vth\,I+\sin\vth\,A)x+O(1)$
via the asymptotic behavior \eqref{eqn.Du.log} of $Du$
(the rough version is enough), we obtain
\begin{eqnarray*}
\log|z|^2
&=&\log x^T(\cos\vth\,I+\sin\vth\,A)^2x+O(1)\\
&=&\log x^T(I+A^2)x+O(1).
\end{eqnarray*}
On the other hand,
since $x=(\fc I-\fs\wt{A})\wt x+O(1)$ via \eqref{eqn.x.ex},
and $A=(\fs I+\fc\wt{A})(\fc I-\fs\wt{A})^{-1}$,
by the definition \eqref{eqn.wtA&wtb.def} of $\wt A$,
it is not hard to verify that the highest degree term of
\[\fs\fc(|z|^{2}-|a_1z|^{2})+(\fc^{2}-\fs^{2})\mathrm{Re}(a_{1}z^{2})\]
is exactly $x^{T}Ax$.
Thus we conclude also that
\[
u(x)=\frac{1}{2}x^{T}Ax+b^Tx+c+\frac{d}{2}\log x^T(I+A^2)x
+O(|x|^{-1})~(|x|\ra\infty).\]

For a complex representation of the solution
to the two dimensional Monge-Amp\`{e}re equation
via the Legendre-Lewy transformation, see \cite{FMM96,FMM99}.
\end{remark}

\section{Further perspectives}\label{sec.FP}

In 2003, Caffarelli and Li \cite{CL03} extended
the J\"{o}rgens-Calabi-Pogorelov theorem for Monge-Amp\`{e}re equations
(which asserts that every convex global solution of the Monge-Amp\`{e}re equation
must be a quadratic polynomial) to exterior domains as follows.
\begin{theorem}\label{thm.u=Q+O.MAE}
(see \cite{CL03})
Let $u$ be a smooth convex solution of the Monge-Amp\`{e}re equation
$\det D^2u=1$ in the exterior domain $\R^n\setminus\ol{\Omega}$.
Then there exists a unique quadratic polynomial $Q(x)$
such that when $n\geq3$,
\begin{equation}\label{eqn.MAE.u=Q+ON}
u(x)=Q(x)+O_k(|x|^{2-n})\quad\mathrm{as}\quad|x|\ra\infty
\end{equation}
for all $k\in\N$, and when $n=2$,
\begin{equation}\label{eqn.MAE.u=Q+log+ON}
u(x)=Q(x)+\frac{d}{2}\log{x^TD^2Qx}+O_k(|x|^{-1})
\quad\mathrm{as}\quad|x|\ra\infty
\end{equation}
for all $k\in\N$, where
\[d=\frac{1}{2\pi}\left(\int_{\p\Omega}
u_{1}(u_{22},-u_{12})\cdot\nu\,ds-|\Omega|\right).\]
\end{theorem}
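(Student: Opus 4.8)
The plan is to derive \cref{thm.u=Q+O.MAE} as a special case of the same machinery used for \cref{thm.u=Q+O.SLE}, specializing the rotation argument of \cref{sec.EBT.SLE} to $\Theta=\pi/2$, where the trigonometric special Lagrangian equation \eqref{eqn.SLE} reduces to $\det D^2u=1$ (in the algebraic form \eqref{eqn.SLE.2d.af} for $n=2$, and in the higher-dimensional form for $n\geq 3$). The main structural difference is that we are no longer handed a \emph{bounded} Hessian: convexity of $u$ gives only $D^2u>0$, but the Monge–Amp\`{e}re constraint $\det D^2u=1$ by itself does not bound the eigenvalues from above. So the first—and, I expect, the main—step is to produce a $U(n)$ (or in fact a Legendre-type) rotation that renders the new potential's Hessian bounded, exactly as in the proof of \cref{thm.u=Q+O.SLE}. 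For the Monge–Amp\`{e}re case one takes $\vth=\pi/4$ (the ``half'' of $\Theta=\pi/2$), so that $\theta_i=\arctan\lambda_i(D^2u)\in(0,\pi/2)$ forces, after the rotation \eqref{eqn.rotation-SLE}, the tilted Hessian to satisfy $D^2\wt u=\mathrm{diag}(\tan(\theta_i-\pi/4))$ with $|\tan(\theta_i-\pi/4)|<1$; in other words $|D^2\wt u|<1$ in $\R^n\setminus\wt\Omega$, and $\wt u$ solves the special Lagrangian equation with critical phase $(n-2)\pi/2$, which is uniformly elliptic and concave on the relevant range. Here convexity of $u$ already supplies the semiconvexity needed to extend $u$ smoothly over $\ol\Omega$ and to run the distance-increasing estimate \eqref{eqn.dist-incr}, so the diffeomorphism $x\mapsto\wt x=\tfrac{1}{\sqrt2}(x+Du(x))$ and the boundedness of $\wt\Omega$ follow verbatim.

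With the bounded-Hessian potential $\wt u$ in hand, the second step is to invoke \cref{thm.u=Q+O.FNL} (equivalently \cref{lem.D2u->A.FNL}) to conclude $D^2\wt u(\wt x)\to\wt A$ as $|\wt x|\to\infty$ for $n\geq 3$, or—in two dimensions, where $\wt u$ is harmonic—to use the Laurent expansion \eqref{eqn.h(z).Laurent} of $h(z)=\p_{\wt x_1}\wt u-\sqrt{-1}\,\p_{\wt x_2}\wt u$ exactly as in \emph{Step 1} of \cref{subsec.EBT.SLE.2d}. The third step is the strip argument of \eqref{eqn.labd<cotvth}: one shows $|\lambda_i(\wt A)|<1=\cot(\pi/4)$, for otherwise the ``gradient'' graph $\{(x,Du(x))\}$ would lie in a bounded strip in some $x_i$-direction, contradicting that $\R^n\setminus\ol\Omega$ is unbounded. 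Consequently the matrix $\tfrac{1}{\sqrt2}(I-\wt A)$ is invertible, and the explicit formula \eqref{eqn.D2u.ef} (with $\vth=\pi/4$) yields $D^2u(x)\to A:=(I+\wt A)(I-\wt A)^{-1}$, so that $|D^2u|\leq C<+\infty$ and the original equation $\det D^2u=1$ is uniformly elliptic on $\R^n\setminus\ol\Omega$. Then a direct application of \cref{thm.u=Q+O.FNL} to $u$ itself gives \eqref{eqn.MAE.u=Q+ON} for $n\geq3$.

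For $n=2$ one finishes exactly as in \cref{subsec.EBT.SLE.2d}: the expansions \eqref{eqn.Dwtu.expa}–\eqref{eqn.D2u=A+O} give $D^2u(x)=A+O(|x|^{-2})$, integrating the rotated formulas \eqref{eqn.x.ex}–\eqref{eqn.Du.log} produces $Du(x)=Ax+b+\tfrac{a_{-1}(I+A^2)x}{x^T(I+A^2)x}+O(|x|^{-2})$, and then \eqref{eqn.SLE.u=abc+log+O} with $\Theta=\pi/2$. The only point deserving care is that the logarithmic coefficient now simplifies to the form in \cref{thm.u=Q+O.MAE}: since $\cos\Theta=0$ and $\sin\Theta=1$ the integration of the equation over $E_R\setminus\Omega$ collapses to $\int_{E_R\setminus\Omega}dx=\int_{\p(E_R\setminus\Omega)}u_1(u_{22},-u_{12})\cdot\nu\,ds$, and—as already noted in \cref{rmk.2d-log-linearization}—the ``conformality'' matrix $\mathcal M=\cos\Theta\,I+\sin\Theta\,(\det A)A^{-1}$ becomes $A^{-1}$, so $\mathcal M^{-1}=A$ and the logarithmic term is $\tfrac{d}{2}\log x^T A x = \tfrac{d}{2}\log x^T D^2 Q\,x$, with $d=\tfrac{1}{2\pi}\big(\int_{\p\Omega}u_1(u_{22},-u_{12})\cdot\nu\,ds-|\Omega|\big)$; the appearance of the $2\pi a_{-1}$ boundary term is justified by the distributional identity \eqref{eqn.delta-argument} specialized to $\cos\Theta=0$. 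The higher-order asymptotics \eqref{eqn.MAE.u=Q+ON}–\eqref{eqn.MAE.u=Q+log+ON} with $O_k$ then follow from the rescaling argument in \emph{Step 4} (using the Nirenberg estimate for $n=2$, the Evans–Krylov and Schauder estimates for $n\geq3$), and uniqueness of $Q$ from the argument in \emph{Step 4} of the proof of \cref{thm.u=Q+O.FNL}. The principal obstacle, as indicated, is the very first step: unlike in \cref{thm.u=Q+O.SLE} the hypothesis gives no a priori Hessian bound, so one must argue that the rotated potential lands in the uniformly elliptic concave regime—but since convexity of $u$ is exactly what makes the rotation $\vth=\pi/4$ legitimate, this obstacle dissolves once the reduction to the critical-phase special Lagrangian equation is set up correctly.
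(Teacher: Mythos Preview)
Your two-dimensional argument is correct and matches the paper exactly: for $n=2$ the Monge--Amp\`{e}re equation is the special Lagrangian equation \eqref{eqn.SLE} with $\Theta=\pi/2$, and the rotation by $\vth=\pi/4$ lands $\wt u$ on the harmonic equation, after which Steps~1--4 of \cref{subsec.EBT.SLE.2d} and \cref{rmk.2d-log-linearization} give \eqref{eqn.MAE.u=Q+log+ON} verbatim.

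For $n\geq 3$, however, there is a genuine gap. The identity ``$\det D^2u=1$ is the special Lagrangian equation with $\Theta=\pi/2$'' is a two-dimensional accident; already for $n=3$ the equation \eqref{eqn.SLE} with $\Theta=\pi/2$ is $\sigma_2(D^2u)=1$, not the Monge--Amp\`{e}re equation (as the introduction of the paper itself notes). Consequently your rotated potential $\wt u$ does \emph{not} satisfy the critical-phase special Lagrangian equation. What it does satisfy is
\[
\det(I+D^2\wt u)=\det(I-D^2\wt u),\qquad |D^2\wt u|<1,
\]
which, while uniformly elliptic, is neither concave nor convex (it is $\log\det(I+M)-\log\det(I-M)$, a sum of a concave and a convex function), and its level set is not convex either: for $n=3$ the level set is $\{\sigma_1(\wt\lambda)+\sigma_3(\wt\lambda)=0\}$, and one checks e.g.\ that the midpoint of $(0.9,-0.9,0)$ and $(0.5,0.5,-0.8)$ lies on the wrong side. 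Hence \cref{thm.u=Q+O.FNL} (and \cref{lem.D2u->A.FNL}, whose proof uses Evans--Krylov) cannot be applied to $\wt u$, and the chain ``$D^2\wt u\to\wt A$ $\Rightarrow$ strip argument $\Rightarrow$ $|D^2u|$ bounded'' breaks at the first link.

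The paper's route for $n\geq 3$ avoids the rotation altogether: extend $u$ smoothly and convexly across $\ol\Omega$, and invoke the Pogorelov interior $C^2$ estimate to obtain $\|D^2u\|_{L^\infty(\R^n\setminus\ol\Omega)}\leq C(n,u,\Omega)$ directly. Since $\log\det$ is concave on positive matrices, $u$ itself then satisfies a uniformly elliptic concave equation with bounded Hessian, and \cref{thm.u=Q+O.FNL} applies to $u$ in one stroke to give \eqref{eqn.MAE.u=Q+ON}. This is both simpler and, unlike the rotation, actually works in dimensions $n\geq 3$.
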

Case $n=2$ was treated earlier in 1999
by Ferrer, Mart\'{\i}nez and Mil\'{a}n \cite{FMM99}
using complex analysis method.
The method in \cite{CL03} is, to deduce first that
the solution is close to a quadratic polynomial
with a sub-quadratic error,
by using Caffarelli's theory on Monge-Amp\`{e}re equation,
and then to obtain the H\"{o}lder closedness at infinity
of the Hessian to some constant matrix,
by estimates of Pogorelov, Evans-Krylov, and Schauder.

We remark that, by extending the solution $u$ inside $\Omega$
such that the new $u$ is smooth and convex in $\R^n$,
and then invoking the Pogorelov estimate
(see \cite[pp. 73--76]{P78} or \cite[pp. 467--471]{GT98}),
we deduce that
\[\norm{D^2u}_{L^\infty\left(\R^n\setminus\ol{\Omega}\right)}
\leq C(n,u,\Omega)<+\infty,\]
which also implies that the equation is uniformly elliptic.
Applying \cref{thm.u=Q+O.FNL}, we have a new proof
of the above exterior J\"{o}rgens-Calabi-Pogorelov type result
for the Monge-Amp\`{e}re equation when $n\geq3$.
Our argument for \cref{thm.u=Q+O.SLE} ($n=2$) gives yet another proof
of \cref{thm.u=Q+O.MAE} ($n=2$), as the Monge-Amp\`{e}re equation
now is equivalent to the special Lagrangian equation
\eqref{eqn.SLE} with $\Theta=\pi/2$.

\bigskip

In 2010, Chang and the third author \cite{CY10} proved
an entire Liouville theorem for the quadratic Hessian equation,
which asserts that every convex solution must be quadratic.
The argument is to make a Legenre-Lewy transformation of the solution
to a new solution of a new uniformly elliptic and convex equation
with bounded Hessian from both sides,
so that Evans-Krylov-Safonov theory applies.
Combining this idea with our exterior Liouville \cref{thm.u=Q+O.FNL} for
general fully nonlinear elliptic and convex equations,
we obtain the following exterior Liouville theorem for the quadratic Hessian equation.
\begin{theorem}\label{thm.u=Q+O.QHE}
Let $n\geq3$ and let $u$ be a smooth solution of the quadratic Hessian equation
$\sigma_2(\lambda(D^2u))=1$ in the exterior domain $\R^n\setminus\ol{\Omega}$.
Suppose
\[D^2u>\left(\delta-\sqrt{2/(n(n-1))}\right)I
\quad\mathrm{in}\quad\R^n\setminus\ol{\Omega}\]
for any fixed $\delta>0$.
Then there exists a unique quadratic polynomial $Q(x)$ such that
\[u(x)=Q(x)+O_k(|x|^{2-n})\quad\mathrm{as}\quad|x|\ra\infty\]
for all $k\in\N$.
\end{theorem}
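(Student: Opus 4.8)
\textbf{Proof proposal for \cref{thm.u=Q+O.QHE}.}
The plan is to reduce the statement to \cref{thm.u=Q+O.FNL} via a Legendre--Lewy transformation, exactly in the spirit of \cite{CY10}, adapted to the exterior setting. First I would use the hypothesis $D^2u>(\delta-\sqrt{2/(n(n-1))})I$ to extend $u$ smoothly over $\ol\Omega$ so that the lower Hessian bound persists on all of $\R^n$; this is possible because the lower bound is a strict open condition and $\Omega$ is bounded. Fixing a small parameter, one then forms the new potential $\wt u$ whose gradient graph is obtained from $\set{(x,Du(x))}$ by adding a suitable multiple of the identity to $D^2u$ (the Lewy rotation/translation in the cotangent direction), i.e. roughly $D\wt u(x)=Du(x)+\tau x$ with $\tau$ chosen so that $D^2\wt u=D^2u+\tau I$ satisfies $\sigma_2(\lambda(D^2u))=1$ transformed into a \emph{convex} uniformly elliptic equation $\wt F(D^2\wt u)=0$. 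The point of \cite{CY10} is that for $\sigma_2=1$ one can choose the transformation so that the eigenvalues of $D^2\wt u$ are pinched in a fixed compact set, giving simultaneously the convexity of $\wt F$, uniform ellipticity, and the two-sided Hessian bound $\norm{D^2\wt u}_{L^\infty}\le K$; the lower Hessian hypothesis on $u$ is precisely what guarantees the transformation $x\mapsto\wt x$ is a global diffeomorphism and $\wt\Omega$ is bounded, via a distance-increasing inequality like \eqref{eqn.dist-incr}.

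Once $\wt u$ is in hand, I would apply \cref{lem.D2u->A.FNL} (available since $\wt F$ is convex, uniformly elliptic, with bounded Hessian) to get $D^2\wt u(\wt x)\to\wt A$ as $|\wt x|\to\infty$ for some symmetric $\wt A$ with $\wt F(\wt A)=0$. Then, as in \emph{Step 2} of \cref{subsec.EBT.SLE.hd}, one runs a strip/asymptote argument to show that $\wt A$ lies in the \emph{interior} of the relevant eigenvalue region, so that the inverse Legendre--Lewy transformation is nondegenerate at infinity; this forces $D^2u(x)\to A$ for an explicit $A$ with $\sigma_2(\lambda(A))=1$, hence the original equation $\sigma_2(\lambda(D^2u))=1$ is itself uniformly elliptic on $\R^n\setminus\ol\Omega$ with bounded Hessian. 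Applying \cref{thm.u=Q+O.FNL} directly to $u$ (now known to satisfy a uniformly elliptic concave equation with bounded Hessian, since $\sigma_2^{1/2}$ is concave on the positive cone and more generally the level set $\set{\sigma_2=1}$ is convex) yields the quadratic asymptotics $u(x)=Q(x)+O_k(|x|^{2-n})$ for all $k$, together with the uniqueness of $Q$.

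The main obstacle I anticipate is \emph{Step 1}: verifying that the Legendre--Lewy transformation is globally well-defined on the exterior domain and produces a genuinely uniformly elliptic convex equation with a two-sided Hessian bound, using only the one-sided hypothesis $D^2u>(\delta-\sqrt{2/(n(n-1))})I$. Unlike the special Lagrangian case, here the transformation is not a $U(n)$ rotation but a Lewy-type shear, and one must track how $\sigma_2(\lambda(D^2u))=1$ transforms; the constant $\sqrt{2/(n(n-1))}$ is exactly the value making the transformed eigenvalues stay in a fixed compact subset of the positivity region where $\wt F$ is smooth, uniformly elliptic, and convex (this is the content of \cite{CY10}). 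The strip argument in the intermediate step is routine given \eqref{eqn.dist-incr}-type estimates, and the final invocation of \cref{thm.u=Q+O.FNL} is immediate once uniform ellipticity and the Hessian bound for the original $u$ are established. I would present the detailed construction of the transformation as ``\emph{Step 1}'', the limit of the Hessian plus strip argument as ``\emph{Step 2}'', and the application of \cref{thm.u=Q+O.FNL} as ``\emph{Step 3}'', paralleling the organization of \cref{sec.EBT.SLE}.
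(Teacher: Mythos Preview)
Your overall strategy is correct and matches the paper's proof exactly: perform the Legendre--Lewy transformation of \cite{CY10}, apply \cref{thm.u=Q+O.FNL} to the transformed $\wt u$, run a strip argument to show the limiting Hessian $\wt A$ is nondegenerate, deduce a two-sided Hessian bound for the original $u$, then apply \cref{thm.u=Q+O.FNL} a second time to $u$ itself.

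However, your description of the transformation in \emph{Step 1} contains a substantive error. You write ``roughly $D\wt u(x)=Du(x)+\tau x$ with $\tau$ chosen so that $D^2\wt u=D^2u+\tau I$'', i.e.\ a pure shift $\wt u=u+\tau|x|^2/2$. This shift alone does \emph{not} produce a two-sided Hessian bound: on the level set $\sigma_2(\lambda)=1$ with $\lambda_i>-K+\delta$ (where $K=\sqrt{2/(n(n-1))}$), individual eigenvalues can still be arbitrarily large, so $D^2u+\tau I$ is unbounded above and \cref{thm.u=Q+O.FNL} cannot be invoked. The actual Legendre--Lewy step, as carried out in both \cite{CY10} and the paper, is: set $w=u+K|x|^2/2$ so that $D^2w>\delta I$, take the \emph{Legendre transform} $\ol w(y)=x\cdot y-w(x)$ with $y=Dw(x)$, and put $\wt u(y)=-\ol w(y)$. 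Then
\[
D^2\wt u(y)=-\bigl(D^2u(x)+KI\bigr)^{-1},
\]
which immediately gives the two-sided bound $-\delta^{-1}I<D^2\wt u<0$. It is the Hessian \emph{inversion} from the Legendre transform, not the additive shift, that converts the one-sided lower bound into a compact pinch. (Relatedly, your remark that the transformation is ``not a $U(n)$ rotation but a Lewy-type shear'' is slightly off: the Legendre transform is precisely the $\pi/2$-rotation of the gradient graph, composed here with the shear coming from the shift by $K|x|^2/2$.) Once you correct this, the rest of your outline---the strip argument showing $\lambda_i(\wt A)<0$ so $\wt A$ is invertible, hence $D^2u\to A=-\wt A^{-1}-KI$, hence $|D^2u|\le C$---goes through exactly as you describe, and the final appeal to \cref{thm.u=Q+O.FNL} (using that $\{\sigma_2=1\}$ is convex) completes the proof.
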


\begin{proof}
\emph{Step 1.}
As in \cite{CY10}, we make a Legendre-Lewy transformation of the solution
to a solution of a new uniformly elliptic and convex equation
with bounded Hessian from both sides.

Write $K=\sqrt{2/(n(n-1))}$
and let $w(x)=u(x)+K|x|^2/2$ for all $x\in\R^n\setminus\ol{\Omega}$.
Then $D^2w>\delta I$ in $\R^n\setminus\ol{\Omega}$.
By assuming $\p\Omega$ is smooth and extending $u$ smoothly to $\R^n$
such that $D^2w>\delta I$ in $\R^n$, we have the distance increasing property
\[|Dw(x)-Dw(x_\star)|
=\abs{\int_0^1D^2w(x_\star+t(x-x_\star))(x-x_\star)dt}
\geq\delta|x-x_\star|\]
for all $x,x_\star\in\R^n$.
Thus $x\mapsto y=Dw(x)$ is globally injective.
Because the Jacobian of the map $\det D_xy=\det D^2w(x)\neq0$,
the closed map $Dw(x)$ is also open.
Therefore, $Dw(x)$ is surjective, $Dw(\R^n)=\R^n$,
$Dw(\Omega)=:\wt\Omega$ is a bounded domain,
and hence $y\mapsto x$, $\R^n\ra\R^n$ is also bijective.

Consider the Legendre transform $\ol w(y)$ of $w(x)$ given by
$\ol w(y)=x(y)\cdot y-w(x(y))$.
We have $x=D\ol w(y)$ and $D^2\ol w(y)=(D^2w(x))^{-1}$.
It follows that the function $\wt u(y)=-\ol w(y)$ satisfies
\begin{equation}\label{eqn.D2wtu.QHE}
-\delta^{-1}I<D^2\wt u(y)=-\left(D^2u(x)+KI\right)^{-1}<0
\end{equation}
for all $y\in\R^n$ and
\[g\left(\wt\lambda(D^2\wt u)\right)
=\sigma_2\left((-\wt\lambda_1^{-1}-K,...,-\wt\lambda_n^{-1}-K)\right)
=1\quad\mathrm{in}\quad\wt\Omega^\CC.\]

As proved in \cite[pp. 661--663]{CY10}, we have
\begin{enumerate}[(i)]
\item
the level set $\Sigma=\set{\wt\lambda\,|\,g(\wt\lambda)=1}$ is convex;

\item
the normal vector $Dg$ of the level set $\Sigma$
is uniformly inside the positive cone
$\Gamma^+=\big\{\wt\lambda\,\big{|}\,\wt\lambda_i>0
~\text{for all}~i=1,2,...,n\big\}$
provided $\wt\lambda_i\in(-\delta^{-1},0)$ for all $i=1,2,...,n$.
\end{enumerate}
Thus $\wt u(y)$ satisfies a uniformly elliptic equation with convexity.

\medskip

\emph{Step 2.}
In view of \eqref{eqn.D2wtu.QHE} and applying \cref{thm.u=Q+O.FNL}, we obtain
\[\wt u(y)=\frac{1}{2}y^{T}\wt Ay+\wt b^Ty+\wt c+O_k(|y|^{2-n})\]
as $|y|\ra\infty$, for all $k\in\N$.
In particular,
\[D^2\wt w(y)\ra\wt A
\quad\mathrm{and}\quad
x=D\wt w(y)=-D\wt u(y)=-\wt Ay+O(1)\]
as $|y|\ra\infty$.
By the strip argument described in the proof
of \eqref{eqn.labd<cotvth} in \cref{subsec.EBT.SLE.hd},
we see that $\lambda_i(\wt A)<0$ for all $i=1,2,...,n$.
(Otherwise, $\R^n\setminus\Omega$ is bounded
in $x_{i_0}$-direction for some $i_0$, a contradiction.)
Thus the matrix $\wt A$ is invertible.
Therefore
\[D^2u(x)=-(D^2\wt u(y))^{-1}-KI\ra-\wt A^{-1}-KI=:A\]
as $|x|\ra\infty$, and $|D^2u(x)|\leq C$ for all $x\in\Omega^\CC$,
which implies that
the original quadratic Hessian equation $\sigma_2(\lambda)=1$
is uniformly elliptic in $D^2u(\Omega^\CC)$.
Since the level set $\set{\lambda\,|\,\sigma_2(\lambda)=1}$ is originally convex,
by applying \cref{thm.u=Q+O.FNL} again,
we thus complete the proof of \cref{thm.u=Q+O.QHE}.
\end{proof}

\bigskip

In 1960, Flanders \cite{Fl60} established an entire Liouville theorem
for the inverse harmonic Hessian equation
\begin{equation}\label{eqn.IHHE}
\frac{1}{\lambda_1(D^2u)}+\cdots+\frac{1}{\lambda_n(D^2u)}=1,
\end{equation}
which says that every smooth convex solution $u$ of \eqref{eqn.IHHE}
in the whole space $\R^n$ must be a quadratic polynomial.
As an application of the same idea in establishing our main \cref{thm.u=Q+O.SLE},
we obtain the following exterior Liouville theorem
for inverse harmonic Hessian equations.
\begin{theorem}\label{thm.u=Q+O.IHHE}
Let $u$ be a smooth convex solution
of the inverse harmonic Hessian equation \eqref{eqn.IHHE}
in the exterior domain $\R^n\setminus\ol{\Omega}$.
Then there exists a unique quadratic polynomial $Q(x)$ such that
when $n\geq3$,
\begin{equation}\label{eqn.IHE.u=Q+ON}
u(x)=Q(x)+O_k(|x|^{2-n})\quad\mathrm{as}\quad|x|\ra\infty
\end{equation}
for all $k\in\N$, and when $n=2$,
\begin{equation}\label{eqn.IHE.u=Q+log(A^2)+ON}
u(x)=Q(x)+\frac{d}{2}\log{x^T(D^2Q)^2x}+O_k(|x|^{-1})
\quad\mathrm{as}\quad|x|\ra\infty
\end{equation}
for all $k\in\N$, where
\[d=\frac{1}{2\pi}\int_{\p\Omega}u_1(u_{22},-u_{12})\cdot\nu-u_\nu\,ds.\]
\end{theorem}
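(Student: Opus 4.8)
The plan is to follow Flanders' Legendre transform device \cite{Fl60}, which converts \eqref{eqn.IHHE} into the Poisson equation $\Delta\ol u=1$ with bounded Hessian, and then to invoke \cref{thm.u=Q+O.FNL} when $n\geq3$ and to mimic the two-dimensional analysis of \cref{subsec.EBT.SLE.2d} when $n=2$ --- in complete parallel with the proofs of \cref{thm.u=Q+O.QHE} and \cref{thm.u=Q+O.SLE}. Convexity together with \eqref{eqn.IHHE} forces $\lambda_i(D^2u)>1$ for all $i$, hence $D^2u>I$ in $\R^n\setminus\ol\Omega$. Assuming (WLOG) that $\p\Omega$ is smooth, first extend $u$ smoothly and convexly to $\R^n$ with $D^2u\geq\ve_0I$ for some $\ve_0\in(0,1]$; exactly as in \emph{Step 1} of the proof of \cref{thm.u=Q+O.QHE}, $Du$ is then a global diffeomorphism of $\R^n$ carrying the bounded $\Omega$ to a bounded $\wt\Omega$, and the Legendre transform $\ol u(y)=x(y)\cdot y-u(x(y))$ satisfies $x=D\ol u(y)$, $D^2\ol u(y)=(D^2u(x))^{-1}$, hence
\[\Delta\ol u=\sum_{i=1}^n\lambda_i(D^2\ol u)=\sum_{i=1}^n\frac1{\lambda_i(D^2u)}=1\quad\text{in}\quad\R^n\setminus\wt\Omega,\qquad 0<D^2\ol u\leq\max\set{1,\ve_0^{-1}}I\ \text{on}\ \R^n.\]
Thus $\ol u$ has bounded Hessian and, outside $\wt\Omega$, solves the linear (hence both convex and concave) uniformly elliptic equation $F(D^2\ol u)=\Delta\ol u-1=0$.

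When $n\geq3$, applying \cref{thm.u=Q+O.FNL} to $\ol u$ (after a harmless translation and rescaling so that the exterior domain becomes $\R^n\setminus\ol B_1$) produces a quadratic $\ol Q(y)=\frac12y^T\ol Ay+\ol b^Ty+\ol c$ with $\mathrm{tr}\,\ol A=1$ and $\ol u(y)=\ol Q(y)+O_k(|y|^{2-n})$, so in particular $D^2\ol u(y)\ra\ol A$ and $x=D\ol u(y)=\ol Ay+\ol b+O(|y|^{1-n})$. The strip argument used to prove \eqref{eqn.labd<cotvth} forces $\lambda_i(\ol A)>0$ for every $i$ --- otherwise $\R^n\setminus\ol\Omega$ would be bounded in some $x$-direction, a contradiction --- so $\ol A$ is invertible, $D^2u(x)=(D^2\ol u(y))^{-1}\ra A:=\ol A^{-1}$ as $|x|\ra\infty$, and $\norm{D^2u}_{L^\infty(\R^n\setminus\ol\Omega)}<\infty$. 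Hence \eqref{eqn.IHHE} is uniformly elliptic there and its level set $\set{\lambda\,|\,\sum_i\lambda_i^{-1}=1}$ in the positive cone is convex (equivalently $F(M)=-\mathrm{tr}(M^{-1})$ is $C^\infty$, concave and uniformly elliptic on the range of $D^2u$, which one extends to a globally uniformly elliptic operator without changing the solution). A second application of \cref{thm.u=Q+O.FNL}, now to $u$, yields \eqref{eqn.IHE.u=Q+ON} for all $k\in\N$.

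When $n=2$, $\Delta\ol u=1$ means $\ol h:=\ol u-|y|^2/4$ is harmonic in $\R^2\setminus\ol{\wt\Omega}$ with bounded Hessian; writing $z=y_1+\sqrt{-1}\,y_2$, the holomorphic function $g(z)=\ol h_{y_1}-\sqrt{-1}\,\ol h_{y_2}$ has linear growth and a Laurent expansion $g(z)=a_1z+a_0+a_{-1}z^{-1}+\cdots$, with $a_{-1}\in\R$ forced by single-valuedness of $\ol u$ --- exactly as in \emph{Step 1} of \cref{subsec.EBT.SLE.2d}. Integrating and differentiating the series term by term gives $D\ol u(y)=\ol Ay+\ol b'+a_{-1}D\log|y|+O(|y|^{-2})$ and $D^2\ol u(y)=\ol A+O(|y|^{-2})$ with $\mathrm{tr}\,\ol A=1$; the strip argument forces $\lambda_1(\ol A),\lambda_2(\ol A)\in(0,1)$, so $\ol A$ is invertible. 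Inverting $x=D\ol u(y)$ and using $D^2u(x)=(D^2\ol u(y))^{-1}$ together with $|y|\geq\ve_0|x|-C$ (the distance-increasing property), one obtains, precisely as in \emph{Step 2} of \cref{subsec.EBT.SLE.2d}, with $A:=\ol A^{-1}$,
\[D^2u(x)=A+O(|x|^{-2}),\qquad Du(x)=Ax+b+d\,\frac{A^2x}{x^TA^2x}+O(|x|^{-2})\quad(|x|\ra\infty)\]
for some $b\in\R^2$ and $d\in\R$ proportional to $a_{-1}$; term-by-term integration then gives $u(x)=Q(x)+\frac d2\log x^TA^2x+O(|x|^{-1})$ with $Q(x)=\frac12x^TAx+b^Tx+c$, so $A=D^2Q$ and the logarithmic term is $\frac d2\log x^T(D^2Q)^2x$ as required.

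It remains to evaluate $d$ and to upgrade the remainders to $O_k$. In two dimensions \eqref{eqn.IHHE} reads $\det D^2u=\Delta u$, i.e. $\mathrm{div}\big(u_1(u_{22},-u_{12})-Du\big)=0$; integrating over $E_R\setminus\Omega$ with $E_R=\set{x\in\R^2\,|\,x^TA^2x<R^2}$ gives $\int_{\p E_R}\big(u_1(u_{22},-u_{12})-Du\big)\cdot\nu\,ds=\int_{\p\Omega}\big(u_1(u_{22},-u_{12})\cdot\nu-u_\nu\big)\,ds$. Substituting $u=Q+\Gamma+O(|x|^{-1})$, $\Gamma=\frac d2\log x^TA^2x$, on $\p E_R$: the purely quadratic part integrates to $\int_{E_R}(\det D^2Q-\Delta Q)\,dx=0$, since $D^2Q=A$ satisfies $\det A=\mathrm{tr}\,A$ (the limit of \eqref{eqn.IHHE}); the purely-$\Gamma$ part is $O(R^{-1})$; and the part linear in $\Gamma$ equals $\int_{E_R}L_Q\Gamma\,dx$, where, with $A$ diagonalized as $\mathrm{diag}(\mu_1,\mu_2)$, $\mu_1^{-1}+\mu_2^{-1}=1$, one computes $L_Q=\tfrac{\mu_2}{\mu_1}\p_{11}+\tfrac{\mu_1}{\mu_2}\p_{22}$ and $x^TA^2x=\mu_1\mu_2\,x^T\mathrm{diag}(\mu_1/\mu_2,\mu_2/\mu_1)x$ with $\det\mathrm{diag}(\mu_1/\mu_2,\mu_2/\mu_1)=1$, yielding the distributional identity $L_Q\Gamma=2\pi d\,\delta_0$ as in \eqref{eqn.delta-argument}; hence the left-hand side equals $2\pi d+O(R^{-1})$, and letting $R\ra\infty$ gives $d=\frac1{2\pi}\int_{\p\Omega}u_1(u_{22},-u_{12})\cdot\nu-u_\nu\,ds$. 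Finally, the $O_k$-improvements (in every dimension) follow from the rescaling-plus-Evans--Krylov--(Nirenberg-in-two-dimensions)-plus-Schauder bootstrap of \emph{Step 3}/\emph{Step 4} in the proofs of \cref{thm.u=Q+O.FNL} and \cref{thm.u=Q+O.SLE}, and uniqueness of $Q$ follows as there. I expect this last paragraph --- pinning down $d$ --- to be the main obstacle: one must check carefully that the quadratic and $O(|x|^{-1})$ contributions to the boundary integral over $\p E_R$ are negligible (this relies on the error terms being smooth, so that differentiating improves their decay, and on $D^2Q=A$ solving the limiting equation) and verify the delta-function identity $L_Q\Gamma=2\pi d\,\delta_0$; everything else is a faithful transcription of the arguments already carried out for \cref{thm.u=Q+O.QHE} and \cref{thm.u=Q+O.SLE}.
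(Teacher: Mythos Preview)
Your proposal is correct and follows essentially the same approach as the paper's own proof: Legendre transform to reduce to $\Delta\ol u=1$ with bounded Hessian, then \cref{thm.u=Q+O.FNL} twice (via the strip argument for invertibility of $\ol A$) when $n\geq3$, and the Laurent-expansion/inversion/divergence-theorem computation of \cref{subsec.EBT.SLE.2d} when $n=2$, including the $\delta$-function identity for $L_Q\Gamma$ from the algebraic form $\Delta u-\det D^2u=0$. Your explicit verification that $(\mu_2-1,\mu_1-1)=(\mu_2/\mu_1,\mu_1/\mu_2)$ and $\sqrt{ab}=1$ in the linearized operator is exactly the ``corresponding $\delta$-function argument to \eqref{eqn.delta-argument}'' the paper alludes to but does not spell out.
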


\begin{proof}
We first make a Legendre transform of the solution $u$
to a new solution $\ol u$ to the Laplace equation
with $D^2\ol u$ being bounded from both sides.

From the equation \eqref{eqn.IHHE},
it is clear that $D^2u>I$ in $\ol\Omega^\CC$.
By assuming $\p\Omega$ is smooth and extending $u$ smoothly to $\R^n$
such that $D^2u>I$ in $\R^n$, we have the distance increasing property
\[|Du(x)-Du(x_\star)|
=\abs{\int_0^1D^2u(x_\star+t(x-x_\star))(x-x_\star)dt}
\geq|x-x_\star|\]
for all $x,x_\star\in\R^n$.
Thus $x\mapsto y=Du(x)$ is globally injective.
Because the Jacobian of the map $\det D_xy=\det D^2u(x)\neq0$,
the closed map $Du(x)$ is also open.
Therefore, $Du(x)$ is surjective, $Du(\R^n)=\R^n$,
and $Du(\Omega)=:\wt\Omega$ is a bounded domain.
Then
\[\ol u(y)=\int^y x\cdot dDu(x)
=x\cdot Du(x)-\int^{x(y)} Du(x)\cdot dx
=x(y)\cdot y-u(x(y)),\]
leading to the Legendre transform of $u$.
Note that the above two equivalent integrals
are well defined for diffeomorphism $x\mapsto y=Du(x)$.
It follows that $x=D\ol u(y)$,
and by the chain rule, $D^2\ol u(y)=(D^2u(x))^{-1}$.
Thus
\[\Delta\ol u=1 \quad\mathrm{and}\quad 0<D^2\ol u<I
\quad\mathrm{in}\quad \R^n\setminus\wt\Omega.\]

\medskip

\textbf{Case $\mathbf{n\geq3}$}.\quad
Invoking \cref{thm.u=Q+O.FNL}
(the proof is much simpler, as the equation now is Laplace), we have
\[\ol u(y)=\frac{1}{2}y^{T}\ol Ay+\ol b^Ty+\ol c+O_k(|y|^{2-n})
\quad\mathrm{as}\quad|y|\ra\infty\]
for all $k\in\N$.
In particular,
\[x=D\ol u(y)=\ol Ay+\ol b+O_k(|y|^{1-n})=\ol Ay+O(1).\]
By the strip argument, as described in the proof
of \eqref{eqn.labd<cotvth} in \cref{subsec.EBT.SLE.hd},
we see that the matrix $\ol A$ is invertible.
Hence
\begin{equation}\label{eqn.D2u(x)->A.IHHE}
D^2u(x)=\left(D^2\ol u(y)\right)^{-1}\ra\ol A^{-1}=:A
\quad\mathrm{and}\quad D^2u(x)=A+O_k(|x|^{-n})
\end{equation}
as $|x|\ra\infty$, and
\[|D^2u(x)|\leq C,~\text{for all}~x\in\Omega^\CC.\]
Applying \cref{thm.u=Q+O.FNL} again, we finally obtain
\[u(x)=Q(x)+O_k(|x|^{2-n})\quad\mathrm{as}\quad|x|\ra\infty\]
for all $k\in\N$.

\begin{remark}
Another way to reach the above asymptotic behavior
\eqref{eqn.IHE.u=Q+ON} is to adopt a similar,
but simpler (without logarithmic term) substitution procedure as in the
proof of \eqref{eqn.SLE.u=Q+log+ON} in \cref{subsec.EBT.SLE.2d}.
Indeed, by substituting
\[y=Ax-A\ol b+O_k(|x|^{1-n})\]
into
\[u(x)=x\cdot y-\ol u(y)=x\cdot y-\ol Q(y)+O_k(|y|^{2-n}),\]
we obtain \eqref{eqn.IHE.u=Q+ON}.
Noting that \eqref{eqn.D2u(x)->A.IHHE} reads $D_xy=D^2u(x)=A+O_k(|x|^{-n})$,
by the chain rule we see that the asymptotic behavior $O_k(|x|^{2-n})$
for any $k$ is also preserved.
\end{remark}

\textbf{Case $\mathbf{n=2}$}.\quad
Now we are exactly in a similar situation as in
\cref{subsec.EBT.SLE.2d} for the proof of \eqref{eqn.SLE.u=Q+log+ON}
of \cref{thm.u=Q+O.SLE} ($n=2$).
Repeating the complex analysis argument of \emph{Step 1},
the similar, but simpler notation-wise rotation argument of \emph{Step 2}
(the Legendre transform is just a $\pi/2$-$U(n)$
rotation followed by a conjugation,
namely, \eqref{eqn.wtu=xuDu.rotation} with $(\fc,\fs)=(0,1)$),
the same divergence argument of \emph{Step 3},
and the same Schauder argument of \emph{Step 4} in \cref{subsec.EBT.SLE.2d},
we conclude that
\[u(x)=Q(x)+\frac{d}{2}\log{x^T(D^2Q)^2x}+O_k(|x|^{-1})
\quad\mathrm{as}\quad|x|\ra\infty\]
for all $k\in\N$.
In particular, the boundary representation for $d$
is similarly calculated via integration of
the algebraic form of the inverse Harmonic Hessian equation
\[\Delta u-\det D^{2}u=0\]
and a corresponding ``$\delta$-function'' argument to \eqref{eqn.delta-argument}.

The uniqueness of $Q(x)$ is proved in the same way as in \emph{Step 4}
in the proof of \cref{thm.u=Q+O.FNL}.

\begin{remark}
Let $w(x)=u(x)-|x|^2/2$.
Then equation \eqref{eqn.IHHE} is equivalent
to the two dimensional Monge-Amp\`{e}re equation $\det D^2w=1$
and also the two dimensional special Lagrangian equation with $\Theta=\pi/2$.
From \eqref{eqn.SLE.u=Q+log+ON} in \cref{thm.u=Q+O.SLE}
and $(D^2Q)^2=(\det D^2Q)(D^2Q-I)$,
\eqref{eqn.IHE.u=Q+log(A^2)+ON} follows.

Note that one can also proceed as
in \cref{rmk.2d-log-linearization} or \cref{rmk.2d-log-representation}
to obtain \eqref{eqn.IHE.u=Q+log(A^2)+ON}.
\end{remark}
\end{proof}


\noindent\textbf{Acknowledgments.}\quad
The first and the second authors are partially supported by NSFC. 11671316.
The third author is partially supported by an NSF grant.
The second author thanks Dr. Yongpan Huang
for several useful discussions in the early stage of this paper.


\end{document}